\def\section{\@startsection{section}{1}{\z@}{3.5ex plus 1ex minus .2ex}
    {2.3ex plus .2ex}{\center\normalfont\large\bfseries\S\,}}
\def\subsection{\@startsection{subsection}{2}{\z@}%
    {1.5ex plus 1ex minus .2ex}{-1.5ex plus
        -.2ex}{\indent\normalfont\bfseries}}%
\def\subsubsection{\@startsection{subsubsection}{3}{\z@}%
    {-1ex plus -1ex minus -.2ex}{-1.5ex plus -.2ex}%
    {\normalsize\indent\normalfont\bfseries}}
\newtheorem{thm}{Теорема}
\newtheorem{thmeng}{Theorem}
\newtheorem{lm}{Лемма}
\newtheorem{conj}{Гипотеза}
\newtheorem{conjeng}{Conjecture}
\newtheorem{cons}{Следствие}
\newtheorem{st}{Утверждение}
\theoremstyle{definition}
\newtheorem{definition}{Определение}
\begin{document}
    
    \renewcommand{\refname}{Литература}
\begin{center}
    {\large\bf Maximal Lie subalgebras of locally nilpotent derivations}
\end{center}
%\medskip
\begin{center}
{\large\bf A. Skutin}
\end{center}
    \section{Introduction}
Let $B$ denote a commutative $\mathbb{K}$-domain, where $\mathbb{K}$ is any field of characteristic zero. In the present article we will be interested in the Lie subalgebras $\mathfrak{g}$ of $\text{LND}(B)$, by which we mean subsets $\mathfrak{g}$ of locally nilpotent derivations of algebra $B$ which are Lie subalgebras of the set of all derivations of $B$.
G. Freudenburg \cite[11.7, p. 238]{Fr} posed the next conjectures about the structure of maximal Lie subalgebras of $\text{LND}(B)$.
\begin{conjeng}
    The triangular subalgebra 
$\frak{T} = \mathbb{K}\partial x_1\oplus \ldots\oplus \mathbb{K}[x_1, \ldots, x_{n-1}]\partial x_n$ is the maxiamal Lie algebra of $\text{LND}(\mathbb{K}[x_1, \ldots, x_n])$.
\end{conjeng}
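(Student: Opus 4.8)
The plan is to argue by contradiction: suppose $\mathfrak{g}$ is a Lie subalgebra of $\operatorname{LND}(\mathbb{K}[x_1,\dots,x_n])$ with $\mathfrak{T}\subsetneq\mathfrak{g}$, fix some $D\in\mathfrak{g}\setminus\mathfrak{T}$, and manufacture from it a genuine \emph{linear} derivation lying in $\mathfrak{g}$ whose matrix is not strictly lower-triangular; this will contradict the local nilpotency of all elements of $\mathfrak{g}$. Two standard facts drive the argument. First, since $\partial_{x_k}\in\mathfrak{T}\subseteq\mathfrak{g}$ for every $k$, the operators $\operatorname{ad}(\partial_{x_k})$ preserve $\mathfrak{g}$; on coefficients they act as $\partial/\partial x_k$, so they commute, are locally nilpotent, and also preserve $\mathfrak{T}$. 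Second, if $E\in\operatorname{LND}(B)$ fixes the origin (i.e. $E\mathfrak{m}\subseteq\mathfrak{m}$ for $\mathfrak{m}=(x_1,\dots,x_n)$), then its linear part $L(E)\in M_n(\mathbb{K})$ is nilpotent: otherwise $L(E)$ has, over $\bar{\mathbb{K}}$, a nonzero eigenvalue $\lambda$ and a linear form $\ell$ with $E^m\ell=\lambda^m\ell+(\text{higher order})\neq 0$ for all $m$, contradicting local nilpotency.

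First I would pass to the quotient $Q=\operatorname{Der}(B)/\mathfrak{T}$, which has as a $\mathbb{K}$-basis the cosets of the \emph{non-triangular} monomial derivations $x^\alpha\partial_{x_i}$, i.e.\ those with $\alpha_j\geq 1$ for some $j\geq i$. The induced operators $\bar\partial_k$ commute, are locally nilpotent on $Q$, and preserve the nonzero subspace $\bar{\mathfrak{g}}=\mathfrak{g}/\mathfrak{T}$ (nonzero because it contains the image of $D$). Starting from any $0\neq\bar v\in\bar{\mathfrak{g}}$ and choosing a multi-index $\beta$ of \emph{maximal} length with $\bar\partial^{\,\beta}\bar v\neq 0$, the element $\bar w:=\bar\partial^{\,\beta}\bar v\in\bar{\mathfrak{g}}$ is annihilated by every $\bar\partial_k$. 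A direct check on monomials shows that the common kernel $\bigcap_k\ker\bar\partial_k$ is exactly the span of the linear non-triangular cosets $\overline{x_j\partial_{x_i}}$ with $j\geq i$; hence $\bar w$ is a nonzero such combination. Lifting $\bar w$ to some $w\in\mathfrak{g}$ and subtracting its triangular component $t\in\mathfrak{T}$ — legitimate because $\mathfrak{T}\subseteq\mathfrak{g}$ — produces a nonzero \emph{linear} derivation $L=w-t=\sum_{j\geq i}c_{ij}\,x_j\partial_{x_i}\in\mathfrak{g}$ whose matrix $A$ has a nonzero entry on or above the diagonal, so $A\notin\mathfrak{n}_{-}$, the algebra of strictly lower-triangular matrices.

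To finish, let $\mathfrak{g}_0=\{E\in\mathfrak{g}: E\mathfrak{m}\subseteq\mathfrak{m}\}$ be the origin-preserving part and let $L(\cdot)$ send a derivation to its linear part. Then $L(\mathfrak{g}_0)$ is a linear subspace of $M_n(\mathbb{K})$ all of whose elements are nilpotent, by the second fact above. It contains $\mathfrak{n}_{-}$, since $x_j\partial_{x_i}\in\mathfrak{T}$ for $j<i$, and it also contains the matrix $A\notin\mathfrak{n}_{-}$ just constructed; hence $\dim L(\mathfrak{g}_0)\geq\binom{n}{2}+1$. This contradicts Gerstenhaber's theorem, by which a linear subspace of $M_n(\mathbb{K})$ consisting of nilpotent matrices has dimension at most $\binom{n}{2}$. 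I expect the main obstacle to be precisely the descent carried out in the second paragraph: because $\mathfrak{g}$ is not a graded subalgebra, one cannot simply take the lowest-degree component of $D$ (it need not lie in $\mathfrak{g}$), and the crux is that iterating the \emph{intrinsic} operators $\operatorname{ad}(\partial_{x_k})$ and then correcting by an element of $\mathfrak{T}$ does land one on an honest linear member of $\mathfrak{g}$. The remaining care concerns the combinatorial identification of $\bigcap_k\ker\bar\partial_k$ and, since $\mathbb{K}$ need not be algebraically closed, the passage to $\bar{\mathbb{K}}$ in the nilpotency argument.
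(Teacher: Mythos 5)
Your proof is correct, but it follows a genuinely different route from the paper's. The paper obtains this statement (its Theorem 7) as a formal corollary of its main classification result: every Lie subalgebra of $\text{LND}(B)$ with kernel $\mathbb{K}$ lies, in suitable coordinates, inside the triangular algebra (Theorem 6), which in turn rests on the entire apparatus of locally nilpotent \emph{sets} of derivations --- Lemmas 1--4, the triangulation Theorem 3, the fact that $\text{ad}$ of a locally nilpotent set is a locally nilpotent set of operators on $\text{Der}(B)$ (Theorem 5), and a Zorn's lemma argument showing that every Lie subalgebra of $\text{LND}(B)$ is a locally nilpotent set. Given all that, the paper embeds $\mathfrak{T}$ in a maximal Lie subalgebra $\mathcal{A}$, notes $\ker\mathcal{A}\subseteq\ker\mathfrak{T}=\mathbb{K}$, conjugates $\mathcal{A}$ into $\mathfrak{T}$, and concludes by maximality. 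You bypass all of this machinery, and your two key steps are sound: (i) the descent inside $Q=\text{Der}(B)/\mathfrak{T}$ by the commuting locally nilpotent operators induced by $\text{ad}(\partial_{x_k})$ is legitimate because these operators preserve both $\mathfrak{g}$ and $\mathfrak{T}$, a maximal multi-index $\beta$ exists since the coefficients of any derivation are polynomials of bounded degree, and your identification of the joint kernel with the span of the cosets $\overline{x_j\partial_{x_i}}$, $j\ge i$, is right (any non-triangular monomial of degree at least two admits a partial derivative that is still non-triangular, so a maximal-degree term would survive differentiation); (ii) the endgame via Gerstenhaber's bound $\binom{n}{2}$ on linear spaces of nilpotent matrices is also sound, since linear parts of origin-preserving locally nilpotent derivations are nilpotent and the image space contains $\mathfrak{n}_-\oplus\mathbb{K}A$ (here you can avoid the passage to $\overline{\mathbb{K}}$ entirely: the operator induced on $\mathfrak{m}/\mathfrak{m}^2$ inherits local nilpotency and is therefore nilpotent). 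What your route buys: a short, self-contained, Zorn-free argument, at the price of invoking Gerstenhaber's theorem and of being tailored to $\mathbb{K}[x_1,\dots,x_n]$ and to $\mathfrak{T}$ specifically. What the paper's route buys: the same machinery simultaneously proves the much stronger Theorem 1 (classification, up to conjugation, of maximal Lie subalgebras with $\ker\mathcal{A}=\mathbb{K}$) and Theorem 6 over arbitrary domains of finite transcendence degree, of which the maximality of $\mathfrak{T}$ is a two-line consequence.
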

\begin{conjeng}
    
    $\frak{T}$ is the unique maximal subalgebra of $\text{LND}(\mathbb{K}[x_1, \ldots, x_n])$ up to conjugation.
    
\end{conjeng}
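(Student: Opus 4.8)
The plan is to prove Conjecture 2 by induction on $n$, taking Conjecture 1 (the maximality of $\mathfrak{T}$) as input; in fact the natural induction establishes both conjectures in tandem, and "conjugate" throughout means conjugate by the action $D \mapsto \phi D \phi^{-1}$ of $\mathrm{Aut}(B)$ on derivations, which preserves $\mathrm{LND}(B)$. The base case $n=1$ is immediate, since $\mathrm{LND}(\mathbb{K}[x_1]) = \mathbb{K}\partial x_1 = \mathfrak{T}$. For the inductive step, let $\mathfrak{g} \subseteq \mathrm{LND}(B)$ be a maximal Lie subalgebra with $B = \mathbb{K}[x_1,\ldots,x_n]$, and consider a maximal element $\mathfrak{a}$ among those abelian ideals $\mathfrak{a} \trianglelefteq \mathfrak{g}$ that are modules over their ring of common invariants $R = \bigcap_{D \in \mathfrak{a}} \ker D$ (recall that $rD$ is again locally nilpotent whenever $r \in \ker D$, so this module condition is a natural closure requirement). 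The decisive first step is a \emph{simultaneous slice theorem}: after conjugating by a suitable $\phi \in \mathrm{Aut}(B)$, one should have $\mathfrak{a} = R\,\partial x_n$ with $R$ a coordinate subring isomorphic to $\mathbb{K}[x_1,\ldots,x_{n-1}]$ — this is precisely the abelian "top" $\mathbb{K}[x_1,\ldots,x_{n-1}]\partial x_n$ of $\mathfrak{T}$.

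Granting this, the remaining steps are essentially formal. Every $E \in \mathfrak{g}$ normalizes $\mathfrak{a}$, hence preserves the common kernel $R$, so restriction defines a Lie-algebra homomorphism $\rho : \mathfrak{g} \to \mathrm{Der}(R)$ with image in $\mathrm{LND}(R)$ (local nilpotency passes to invariant subrings). A direct bracket computation shows $[E, r\,\partial x_n] = E(r)\,\partial x_n$ for $r \in R$, so $\ker \rho$ equals the centralizer of $\mathfrak{a}$ in $\mathfrak{g}$, which by maximality of $\mathfrak{a}$ together with the fact that $f\,\partial x_n$ is locally nilpotent only for $f \in R$ is exactly $\mathfrak{a}$ itself. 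Thus $0 \to \mathfrak{a} \to \mathfrak{g} \to \rho(\mathfrak{g}) \to 0$ is exact, and $\rho(\mathfrak{g})$ is a maximal Lie subalgebra of $\mathrm{LND}(R) = \mathrm{LND}(\mathbb{K}[x_1,\ldots,x_{n-1}])$, maximality being inherited by lifting triangular-type derivations of $R$ back to $B$ along the projection fixing $x_n$. By the induction hypothesis $\rho(\mathfrak{g})$ is conjugate to the $(n-1)$-variable triangular algebra; extending the conjugating automorphism of $R$ to $B$ by fixing $x_n$ and combining with the exact sequence identifies $\mathfrak{g}$ with $\mathfrak{T}$.

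The main obstacle is the first step, the classification of the maximal abelian ideal $\mathfrak{a}$. For $n=2$ it follows from Rentschler's theorem: every nonzero locally nilpotent derivation is conjugate to $f(x_1)\partial x_2$, and the commuting condition pins down the coordinate $x_2$ and forces the common kernel to be the polynomial ring $\mathbb{K}[x_1]$. For $n=3$ one can hope to substitute Miyanishi's theorem that the kernel of a nonzero locally nilpotent derivation of $\mathbb{K}[x_1,x_2,x_3]$ is a polynomial ring in two variables, which would supply the coordinate subring $R$. For $n \geq 4$, however, this is exactly where I expect the scheme — and very possibly Conjecture 2 itself — to fail: there is no higher-dimensional analogue of Rentschler's theorem, a single locally nilpotent derivation need not admit a global slice, its kernel may fail to be polynomial (or even finitely generated), and $\mathrm{Aut}(\mathbb{A}^n)$ is not generated by the tame automorphisms that would realize the required conjugations. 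I would therefore verify the induction carefully in dimension $3$ and treat a potential breakdown of the simultaneous slice theorem in dimension $4$ not as a gap to be patched but as a candidate counterexample, rather than assume the inductive scheme proceeds unconditionally.
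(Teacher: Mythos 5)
Your hedge that Conjecture 2 might fail is the right instinct, but your calibration is wrong at the one point that matters: the conjecture is false already for $n=3$, which is precisely the dimension you propose to trust, and the paper's treatment of this statement is a refutation, not a proof. The paper's counterexample is the derivation $D = x\partial y + y\partial z$ of $\mathbb{K}[x,y,z]$, whose kernel is $\mathbb{K}[x,\, 2xz-y^2]$. The abelian Lie algebra $(\ker D)D$ consists of locally nilpotent derivations, hence lies in some maximal Lie subalgebra $\mathcal{M}$. If $\mathcal{M}$ were conjugate to $\frak{T}$, then $(\ker D)D$ would be contained in the triangular algebra of some coordinate system $(x',y',z')$; the paper's Theorem 2 shows that this forces $\ker D = \mathbb{K}[x',y']$ and $D = f\partial z'$, which would give $\mathbb{K}[x,y,z] = (\ker D)[z'] = \mathbb{K}[x,\, 2xz-y^2,\, z']$. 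That is impossible: the pair $(x,\, 2xz-y^2)$ cannot be completed to a coordinate system of $\mathbb{K}[x,y,z]$ (after passing to the algebraic closure, the map $(x,y,z)\mapsto(x,\, 2xz-y^2)$ has disconnected fibers over points with $x=0$, $2xz-y^2 \not= 0$, so it is not a trivial line fibration).

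This pinpoints the gap in your inductive scheme, and it sits exactly in your appeal to Miyanishi's theorem. Miyanishi gives that the kernel of a nonzero locally nilpotent derivation of $\mathbb{K}[x_1,x_2,x_3]$ is \emph{abstractly} a polynomial ring in two variables; your \emph{simultaneous slice theorem} needs the far stronger conclusion that it is a \emph{coordinate} subring, i.e., that its generators extend to a full system of variables. The example above separates these two notions: $\mathbb{K}[x,\, 2xz-y^2]$ is a polynomial ring in two variables, yet it is not a coordinate subring. So the first (and, as you say, decisive) step of your induction collapses at $n=3$, before any of the genuinely higher-dimensional pathologies you list for $n\geq 4$ come into play. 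The correct positive statement, which the paper proves as Theorem 1, is a characterization rather than unconditional uniqueness: a maximal Lie subalgebra $\mathcal{A}\subseteq\text{LND}(\mathbb{K}[x_1,\ldots,x_n])$ is conjugate to $\frak{T}$ if and only if $\ker\mathcal{A} = \cap_{D\in\mathcal{A}}\ker D = \mathbb{K}$; in particular the maximal algebra $\mathcal{M}$ above must have $\ker\mathcal{M}\not=\mathbb{K}$, and this kernel condition is the invariant your scheme would have needed to detect. (Conjecture 1, the maximality of $\frak{T}$, is nonetheless true; the paper deduces it from Theorem 1 rather than by induction.)
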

In this article, we prove that conjecture 1 is true, and conjecture 2 is false for $n> 2$. However, under some natural ussumptions about the Lie subalgebra $\mathcal{A}$ of $\text{LND}(\mathbb{K}[x_1, \ldots, x_n])$, conjecture 2 can be proved. In fact, we prove the following theorem:
\begin{thmeng}
    
    A maximal Lie subalgebra $\mathcal{A}$ of $\text{LND}(\mathbb{K}[x_1,\ldots , x_n])$  is conjugated to $\mathbb{K}\partial x_1\oplus \ldots \oplus \mathbb{K}[x_1, \ldots , x_{n-1}]\partial x_n$ if and only if $\ker\mathcal{A} := \cap_{D\in\mathcal{A}}\ker D=\mathbb{K}$.
    
\end{thmeng}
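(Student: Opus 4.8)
The plan is to treat the two implications separately, the forward (``only if'') implication being routine and the converse carrying all the difficulty.

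For the forward direction, recall that conjugation is the action $D\mapsto\varphi D\varphi^{-1}$ of an automorphism $\varphi$ of $R=\mathbb{K}[x_1,\ldots,x_n]$, and that $\ker(\varphi D\varphi^{-1})=\varphi(\ker D)$; hence $\ker(\varphi\mathfrak{T}\varphi^{-1})=\varphi(\ker\mathfrak{T})$. Since every automorphism of $R$ fixes $\mathbb{K}$ pointwise, it suffices to check $\ker\mathfrak{T}=\mathbb{K}$. But each $\partial x_i$ lies in $\mathfrak{T}$ (constants being admissible coefficients), so $\ker\mathfrak{T}\subseteq\bigcap_i\ker\partial x_i=\mathbb{K}$, and the reverse inclusion is trivial.

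For the converse, assume $\mathcal{A}$ is maximal with $\ker\mathcal{A}=\mathbb{K}$. I would argue by induction on $n$ ($n=1$ being immediate, $n=2$ following from the classical Rentschler classification), the goal being to produce coordinates $y_1,\ldots,y_n$ in which every $D\in\mathcal{A}$ is triangular, i.e.\ $D(y_i)\in\mathbb{K}[y_1,\ldots,y_{i-1}]$. Once this is achieved we have $\mathcal{A}\subseteq\mathfrak{T}$ in the $y$-coordinates; since $\mathfrak{T}$ is itself a Lie algebra of locally nilpotent derivations and $\mathcal{A}$ is \emph{maximal}, equality $\mathcal{A}=\mathfrak{T}$ follows at once (this is also consistent with Conjecture~1, that $\mathfrak{T}$ is maximal). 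Thus the whole problem reduces to \emph{simultaneous triangularization}: building an $\mathcal{A}$-stable complete flag of factorially closed subalgebras $\mathbb{K}=A_0\subset A_1\subset\cdots\subset A_n=R$ with $\operatorname{trdeg}A_i=i$ and $D(A_i)\subseteq A_{i-1}$ for all $D\in\mathcal{A}$.

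The concrete mechanism I would use to descend the induction is to locate an abelian ideal $\mathfrak{a}\subseteq\mathcal{A}$ playing the role that $\mathbb{K}[x_1,\ldots,x_{n-1}]\partial x_n$ plays inside $\mathfrak{T}$, set $A:=\ker\mathfrak{a}=\bigcap_{D\in\mathfrak{a}}\ker D$, and show that $A$ is a polynomial ring $\mathbb{K}[y_1,\ldots,y_{n-1}]$ of transcendence degree $n-1$ admitting a slice $y_n$ with $R=A[y_n]$. Note that $A$ is automatically $\mathcal{A}$-stable: for $D\in\mathcal{A}$, $E\in\mathfrak{a}$, $a\in A$ one has $E(D(a))=D(E(a))-[D,E](a)=0$ since $[D,E]\in\mathfrak{a}$, so $D(a)\in A$; hence $\mathcal{A}$ acts on $A\cong R/(y_n)$. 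The induced Lie algebra on $A$ should again be maximal with trivial kernel, so induction supplies the flag on $A$, which one then lifts by adjoining $y_n$. It is precisely here that $\ker\mathcal{A}=\mathbb{K}$ is indispensable: a nontrivial common kernel would force a ``block'' on which $\mathcal{A}$ acts trivially and would obstruct continuing the flag to the top, whereas trivial kernel is exactly what lets the character $D\mapsto D(y_1)\in\mathbb{K}$ cut out the bottom step $A_1=\mathbb{K}[y_1]$.

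The main obstacle I anticipate is controlling the subalgebra $A$: kernels of locally nilpotent derivations in three or more variables need not be polynomial rings, so the proof must leverage maximality together with $\ker\mathcal{A}=\mathbb{K}$ to exclude such pathologies and to upgrade a local slice to a genuine slice realizing $R=A[y_n]$. A second, subtler point is verifying that the \emph{induced} action on $A$ inherits both maximality and the trivial-kernel property, since these are exactly the hypotheses the induction consumes; establishing this compatibility, and then checking that an $\mathcal{A}$-stable flag can be strengthened to an honestly triangular one (so that $\mathcal{A}\subseteq\mathfrak{T}$ rather than merely flag-preserving), is where I expect the bulk of the technical work to lie.
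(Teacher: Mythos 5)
Your forward implication is complete and coincides with the paper's (conjugation transports kernels, and $\ker\mathfrak{T}=\mathbb{K}$), and your final remark that maximality upgrades an inclusion $\mathcal{A}\subseteq\varphi\mathfrak{T}\varphi^{-1}$ to equality is also how the paper finishes. But the converse, which is the entire content of the theorem, remains a plan rather than a proof, and the steps on which it pivots are exactly the ones you leave open. First, you never produce the abelian ideal $\mathfrak{a}\subseteq\mathcal{A}$: a priori nothing is known about an abstract Lie subalgebra of $\text{LND}(\mathbb{K}[x_1,\ldots,x_n])$ --- not solvability, not finite-dimensionality, not even the existence of a nonzero abelian ideal of the right size --- and establishing such structure \emph{is} the heart of the problem, not a preliminary. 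Second, even granting $\mathfrak{a}$, the step ``$A=\ker\mathfrak{a}$ is a polynomial ring admitting a slice $y_n$ with $R=A[y_n]$'' fails for abelian Lie algebras of locally nilpotent derivations in general: the paper's own counterexample to Conjecture 2 takes $D=x\partial y+y\partial z$ and the abelian algebra $(\ker D)D$, where $\ker D=\mathbb{K}[x,2xz-y^2]$ has transcendence degree $2$ yet $\mathbb{K}[x,y,z]\neq(\ker D)[w]$ for every $w$. You say that maximality of $\mathcal{A}$ together with $\ker\mathcal{A}=\mathbb{K}$ must exclude this pathology, but you give no mechanism, and none is easy to supply. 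Third, your induction consumes the hypotheses ``maximal'' and ``trivial kernel'' for the restricted algebra $\mathcal{A}|_A$, and neither is shown to descend: a restriction of a maximal Lie algebra need not be maximal among Lie subalgebras of $\text{LND}(A)$.

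The paper's route is entirely different and avoids each of these difficulties. It introduces locally nilpotent \emph{sets} of derivations; proves that for such a set $S$ the family $\text{ad}\,S$ acts locally nilpotently on $\text{Der}(B)$ (Theorem 5); and concludes by Zorn's lemma that every Lie subalgebra of $\text{LND}(B)$ is itself a locally nilpotent set (Statement 1) --- this replaces the structure theory your outline presupposes. Triangularization is then Theorem 3, proved bottom-up rather than top-down: among subalgebras $A=\mathbb{K}[a_1,\ldots,a_k]$ on which $S$ restricts triangularly, one takes a maximal one and enlarges it by an element $b\notin A$ with $Sb\subseteq A$ (supplied by Lemma 1), shown to be algebraically independent over $A$ using $\ker S=\mathbb{K}$. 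There are no abelian ideals, no slices, no appeal to Rentschler, and no induction on the number of variables; the argument works over any domain of finite transcendence degree, which is how the paper also obtains the stronger Theorem 6. To salvage your outline you would need to prove substitutes for essentially all of this machinery.
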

Author is greateful to I.V. Arzhantsev for stating the problem and attention to work.
\newpage
\begin{center}
    {\large\bf Максимальные подалгебры Ли среди локально нильпотентных дифференцирований}
\end{center}
%\medskip
\begin{center}
{\large\bf А. Скутин}
\end{center}
%\end{title}
    \setcounter{section}{0}
    \section{Введение}
    
    Рассмотрим произвольную коммутативную алгебру $B$ без делителей нуля, имеющую конечную степень трансцендентности над полем нулевой характеристики $\mathbb{K}$. Далее в работе всегда будем рассматривать только такие алгебры. Нас будут интересовать алгебры Ли содержащиеся среди локально нильпотентных дифференцирований алгебры $B$.
    
    В своей монографии \cite{Fr} Джином Фройденбургом были высказаны следующие гипотезы о строении максимальных по вложению подалгебр Ли среди локалньно нильпотентных дифференцирований алгебры $B$ в случае, когда $B$~-- алгебра многочленов от $n$ переменных \cite[11.7; стр. 238]{Fr}.
    
    \begin{conj}
        
        Алгебра Ли $\frak{T} = \mathbb{K}\partial x_1\oplus \ldots\oplus \mathbb{K}[x_1, \ldots, x_{n-1}]\partial x_n$ является максимальной по вложению алгеброй Ли, лежащей в $\text{LND}(\mathbb{K}[x_1, \ldots, x_n])$.
        
    \end{conj}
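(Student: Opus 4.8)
The plan is to prove maximality directly: I assume $\mathfrak{g}\subseteq\mathrm{LND}(\mathbb{K}[x_1,\dots,x_n])$ is a Lie subalgebra with $\mathfrak{T}\subsetneq\mathfrak{g}$ and manufacture a derivation in $\mathfrak{g}$ that is \emph{not} locally nilpotent, a contradiction. First note that $\partial_i\in\mathfrak{T}$ for every $i$ and $x_j\partial_i\in\mathfrak{T}$ whenever $j<i$. Call a monomial $x^{\alpha}$ appearing in the coefficient of $\partial_i$ \emph{bad} if $\alpha_m>0$ for some $m\ge i$, i.e. $x^{\alpha}\notin\mathbb{K}[x_1,\dots,x_{i-1}]$. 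Pick $E\in\mathfrak{g}\setminus\mathfrak{T}$ and subtract its triangular part (the sum over $i$ of the non-bad monomials of its $\partial_i$-coefficient, which lies in $\mathfrak{T}\subseteq\mathfrak{g}$). This yields $E'\in\mathfrak{g}$, $E'\ne 0$, each of whose coefficients consists only of bad monomials.

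I will use two tools. The first is a leading-term lemma: for any positive weight $w\in\mathbb{Z}_{>0}^{n}$ (grading $\deg_w x^{\alpha}=\langle w,\alpha\rangle$), the lowest-$w$-weight homogeneous component $D_{(j_0)}$ of a locally nilpotent derivation $D$ is again locally nilpotent. This is immediate: writing $D=\sum_j D_{(j)}$ with finitely many shifts $j\ge j_0$, the unique term of $D^{N}$ with minimal total shift $Nj_0$ is $D_{(j_0)}^{N}$, so for homogeneous $f$ the minimal-weight component of $D^{N}f=0$ is $D_{(j_0)}^{N}f=0$. The second tool is that $\mathrm{ad}(\partial_j)=\partial/\partial x_j$ maps $\mathfrak{g}$ into $\mathfrak{g}$ (since $\partial_j\in\mathfrak{T}\subseteq\mathfrak{g}$), so I may freely differentiate the coefficients of any element of $\mathfrak{g}$ and stay inside $\mathfrak{g}$.

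Now fix a \emph{generic} positive weight $w$, so that distinct pairs (monomial, $\partial_i$) have distinct $w$-weights except for the unavoidable diagonal coincidences (where two blocks differ by $e_i-e_{i'}$). Let $B$ be the lowest-$w$-weight component of $E'$; by the leading-term lemma $B$ is locally nilpotent. If $B$ is a degenerate (multi-term) component, a short analysis shows it has the form $B=x^{\rho}\sum_k c_k\,x_{i_k}\partial_{i_k}$, and one checks such a $B$ is never locally nilpotent unless $B=0$ — contradiction. Otherwise $B=c\,x^{\beta}\partial_i$ is a single term; local nilpotency forces $\beta_i=0$, and badness then gives some $m>i$ with $\beta_m>0$. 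Applying $\partial^{\beta-e_m}$ to $E'$ lowers every surviving weight by $\langle w,\beta-e_m\rangle$, hence preserves the weight order and carries the bottom term to a single term; this produces $\hat E\in\mathfrak{g}$ with lowest component $c'\,x_m\partial_i$, $m>i$, $c'\ne0$. Finally I bracket with $x_i\partial_m\in\mathfrak{T}$: since the minimal weight of $[x_i\partial_m,\hat E]$ is attained only by the bracket of lowest components,
\[
[x_i\partial_m,\hat E]_{\mathrm{low}}=c'[x_i\partial_m,x_m\partial_i]=c'\,(x_i\partial_i-x_m\partial_m),
\]
a nonzero semisimple linear derivation that is \emph{not} locally nilpotent. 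This contradicts the leading-term lemma applied to $[x_i\partial_m,\hat E]\in\mathfrak{g}\subseteq\mathrm{LND}$, and therefore $\mathfrak{g}=\mathfrak{T}$.

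The main obstacle is the reduction step: one must guarantee that the chosen bad monomial survives all the differentiations without cancellation and reappears as a genuine off-diagonal linear term $x_m\partial_i$ with $m>i$. The generic weight is exactly what makes this work, since it forces each extremal graded component (away from the diagonal locus) to be a single term and makes $\partial^{\delta}$ preserve the weight ordering, so the bottom term is tracked unambiguously. The only other delicate point, the degenerate multi-term bottom component, turns out to be the easiest case, since those components $x^{\rho}\sum_k c_k x_{i_k}\partial_{i_k}$ are patently non-locally-nilpotent; and the final contradiction rests on the classical identity $[x_i\partial_m,x_m\partial_i]=x_i\partial_i-x_m\partial_m$, the linear witness that $\mathfrak{T}$ admits no locally nilpotent enlargement.
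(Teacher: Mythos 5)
Your proof is correct, but it takes a genuinely different route from the paper. The paper obtains the maximality of $\mathfrak{T}$ as a corollary of heavy machinery developed for arbitrary domains $B$ of finite transcendence degree: it introduces locally nilpotent \emph{sets} of derivations, shows (via Zorn's lemma, the theorem that $\mathrm{ad}$ of a locally nilpotent set is a locally nilpotent set of operators on $\mathrm{Der}(B)$, and Lemmas 1--4) that every Lie subalgebra of $\mathrm{LND}(B)$ is such a set, proves that any locally nilpotent set with kernel $\mathbb{K}$ is simultaneously triangularizable (Theorem 3), and then settles Conjecture 1 in one line: a maximal Lie algebra containing $\mathfrak{T}$ has kernel $\mathbb{K}$, hence is conjugate to a subalgebra of $\mathfrak{T}$, hence equals $\mathfrak{T}$ by maximality. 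You instead argue directly and entirely inside $\mathbb{K}[x_1,\dots,x_n]$: strip the triangular part, pass to the lowest homogeneous component for a generic positive grading (the leading-term lemma, a standard fact about extreme components of an LND), kill the degenerate diagonal components, use $\mathrm{ad}(\partial_j)$ with $\partial_j\in\mathfrak{T}$ to drive the bottom term down to an off-diagonal linear term $x_m\partial_i$ with $m>i$, and bracket with $x_i\partial_m\in\mathfrak{T}$ to exhibit the semisimple derivation $c'(x_i\partial_i-x_m\partial_m)$ as a lowest component of an element of $\mathfrak{g}$, contradicting the leading-term lemma. This is more elementary and self-contained (no Zorn's lemma, no theory of locally nilpotent sets), at the price of being specific to the polynomial ring; the paper's route proves far more along the way, namely Theorems 1 and 6 on triangularizability of arbitrary Lie subalgebras of $\mathrm{LND}(B)$ with trivial kernel, of which the conjecture is a formal consequence. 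The two steps you leave as ``one checks'' do check out, and they are the places to write in full: for $B=x^{\rho}\sum_k c_k x_{i_k}\partial_{i_k}$ one computes $B^{N}(x_{i_k})=\prod_{t=0}^{N-1}(t\lambda+c_k)\,x^{N\rho+e_{i_k}}$ with $\lambda=\sum_k c_k\rho_{i_k}$, so if $\lambda=0$ the coefficient is $c_k^{N}\neq 0$, while if $\lambda\neq 0$ local nilpotency forces $c_k=-t_k\lambda$ with $t_k\in\mathbb{Z}_{\geq 0}$ for every $k$, whence $\sum_k t_k\rho_{i_k}=-1$, impossible; the excluded single-term diagonal case $\beta_i>0$ is the one-summand instance of the same computation.
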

    
    \begin{conj}
        
        Все максимальные алгебры Ли $\mathcal{A}\subset \text{LND}(\mathbb{K}[x_1,\ldots , x_n])$ сопряжены алгебре Ли $\mathbb{K}\partial x_1\oplus \ldots \oplus \mathbb{K}[x_1, \ldots , x_{n-1}]\partial x_n$.
        
    \end{conj}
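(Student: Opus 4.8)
The plan is to reduce this universal claim to a single conjugation invariant, the common kernel $\ker\mathcal{A}$, by invoking the Theorem stated above. That theorem asserts that a maximal $\mathcal{A}$ is conjugate to $\mathfrak{T}$ \emph{if and only if} $\ker\mathcal{A}=\mathbb{K}$. Since conjugation by $\phi\in\text{Aut}(\mathbb{K}[x_1,\ldots,x_n])$ carries $\ker\mathcal{A}$ to $\phi(\ker\mathcal{A})$ and hence preserves $\operatorname{trdeg}_{\mathbb{K}}\ker\mathcal{A}$, and since $\mathbb{K}$ is algebraically closed in the polynomial ring (so that $\operatorname{trdeg}_{\mathbb{K}}\ker\mathcal{A}=0$ already forces $\ker\mathcal{A}=\mathbb{K}$), the transcendence degree of the common kernel is a genuine conjugacy invariant, equal to $0$ for $\mathfrak{T}$. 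Thus the entire conjecture is equivalent to the single assertion that \emph{every} maximal Lie subalgebra of $\text{LND}$ satisfies $\ker\mathcal{A}=\mathbb{K}$; this is exactly the class of maximal algebras whose kernel is not assumed trivial in advance, and it is here that all the content lies.

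The remaining task is therefore to rule out maximal subalgebras with nontrivial kernel. The natural attack is by contradiction with maximality: given a nonconstant $f\in\ker\mathcal{A}$, one tries to produce a locally nilpotent derivation $E\notin\mathcal{A}$ such that the Lie algebra generated by $\mathcal{A}$ and $E$ still lies in $\text{LND}$, contradicting maximality. Concretely one would pass to coordinates in which $f$ becomes a variable and take $E$ to be a triangular derivation moving $f$, then verify that every bracket $[E,D]$ with $D\in\mathcal{A}$ remains locally nilpotent. For $n\le 2$ this program can be carried through: there is too little room for a nontrivial common kernel to coexist with maximality, so every maximal $\mathcal{A}$ has trivial kernel and, by the Theorem, is conjugate to $\mathfrak{T}$ --- confirming the conjecture in those cases.

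The step I expect to be the real obstacle is precisely this kernel argument for $n>2$, and it is there that the conjecture in fact fails. One can \emph{fiber} the construction: regard one coordinate, say $x_n$, as a parameter and form a triangular algebra of $\mathbb{K}[x_n]$-linear locally nilpotent derivations in the remaining variables, for instance $\mathbb{K}[x_n]\partial x_1\oplus\mathbb{K}[x_1,x_n]\partial x_2\oplus\ldots\oplus\mathbb{K}[x_1,\ldots,x_{n-2},x_n]\partial x_{n-1}$. Any such algebra annihilates $x_n$, so its common kernel contains $\mathbb{K}[x_n]$ and has $\operatorname{trdeg}_{\mathbb{K}}\ker\mathcal{A}\ge 1$; once one checks that a suitable family of this type is maximal among \emph{all} locally nilpotent derivations, the conjugation invariance of $\operatorname{trdeg}_{\mathbb{K}}\ker\mathcal{A}$ shows it is not conjugate to $\mathfrak{T}$. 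The universal conjecture is thus false for $n>2$, so the honest resolution is threefold: the kernel reduction of the first paragraph, the positive verification for $n\le 2$, and an explicit maximal fibered algebra with positive-transcendence-degree kernel as a counterexample for $n>2$. The hardest single point is proving maximality of that counterexample --- the analogue of Conjecture~1 in the parametric setting --- since maximality, unlike the kernel computation, demands control over \emph{all} possible locally nilpotent extensions.
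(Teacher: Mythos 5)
Your high-level reduction is sound: given Theorem~1, the conjecture is equivalent to the assertion that every maximal Lie subalgebra of $\text{LND}(\mathbb{K}[x_1,\ldots,x_n])$ has $\ker\mathcal{A}=\mathbb{K}$, and you correctly guess that this fails for $n>2$, which is indeed the paper's conclusion (Следствие~1 refutes the conjecture for $n=3$). But your proposed counterexample collapses. The fibered algebra $\mathcal{F}=\mathbb{K}[x_n]\partial x_1\oplus\mathbb{K}[x_1,x_n]\partial x_2\oplus\ldots\oplus\mathbb{K}[x_1,\ldots,x_{n-2},x_n]\partial x_{n-1}$ is contained in the triangular algebra taken with respect to the reordered variables $x_n\prec x_1\prec\ldots\prec x_{n-1}$, i.e.\ in a conjugate of $\frak{T}$ under the permutation automorphism; in particular one can adjoin $\partial x_n$ to $\mathcal{F}$ and remain inside $\text{LND}$. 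So $\mathcal{F}$ is never maximal, and any maximal overalgebra of $\mathcal{F}$ can perfectly well have kernel $\mathbb{K}$: a nontrivial common kernel of a \emph{subfamily} is not an invariant that survives passage to a maximal overalgebra. The step you flag as the hardest point --- proving maximality of $\mathcal{F}$ --- is not merely hard, it is impossible, so the whole counterexample strategy as stated cannot be repaired by more effort on that step.

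The paper's refutation uses a genuinely different obstruction. It takes $D=x\partial y+y\partial z$ on $\mathbb{K}[x,y,z]$ and the abelian Lie algebra $(\ker D)D$, where $\ker D=\mathbb{K}[x,2xz-y^2]$. Theorem~2 of the paper shows that if $(\ker D)D$ lay inside any conjugate $\mathbb{K}\partial x'\oplus\mathbb{K}[x']\partial y'\oplus\mathbb{K}[x',y']\partial z'$ of $\frak{T}$, then necessarily $\ker D=\mathbb{K}[x',y']$ and $D=f\partial z'$, forcing $\mathbb{K}[x,y,z]=(\ker D)[z']=\mathbb{K}[x,2xz-y^2][z']$ --- impossible, since $\mathbb{K}[x,2xz-y^2]$ cannot be completed to the full polynomial ring by adjoining a single element. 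So the conjugacy obstruction is not the size of $\ker\mathcal{A}$ for the maximal algebra (your invariant, which is correct but too coarse here), but the non-rectifiability of $\ker D$ as a subring: the subalgebra by which the family is multiplied is not a coordinate subalgebra. That is exactly the bridge from \emph{subfamily with special structure} to \emph{maximal algebra not conjugate to $\frak{T}$} that your construction lacks. (Your $n\le 2$ claim is also left as a sketch; it is classical via Rentschler's theorem, and the paper does not prove it, asserting only failure in the general case $n>2$.)
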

        
    Как мы покажем дальше, гипотеза 1 верна, а гипотеза 2 не верна в общем случае. Однако, на алгебру Ли $\mathcal{A}$ можно наложить некоторые естественные дополнительные условия, при которых гипотеза 2 всё же имеет место. В этой работе мы докажем, что гипотеза 2 верна в следующей более слабой форме:
    
    \begin{thm}
        
        Пусть дана максимальная алгебра Ли $\mathcal{A}\subset\text{LND}(\mathbb{K}[x_1,\ldots , x_n])$ и известно, что $\ker\mathcal{A} := \cap_{D\in\mathcal{A}}\ker D=\mathbb{K}$. Тогда $\mathcal{A}$ сопряжена алгебре Ли $\mathbb{K}\partial x_1\oplus \ldots \oplus \mathbb{K}[x_1, \ldots , x_{n-1}]\partial x_n$.
        
    \end{thm}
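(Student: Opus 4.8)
The plan is to treat the two implications separately, the forward one being a short computation and the reverse one the substance of the theorem. For the \emph{only if} direction I would first record that $\ker\mathfrak{T}=\mathbb{K}$: every coordinate derivation $\partial x_i$ lies in $\mathfrak{T}$ (with constant coefficient), so a common kernel element is killed by all the $\partial x_i$ and hence is constant. Next I would note that conjugation by an automorphism $\phi$ of $\mathbb{K}[x_1,\dots,x_n]$ sends $\ker D$ to $\phi(\ker D)$, since $\phi D\phi^{-1}f=0$ is equivalent to $\phi^{-1}f\in\ker D$. Therefore $\ker(\phi\mathfrak{T}\phi^{-1})=\phi(\ker\mathfrak{T})=\phi(\mathbb{K})=\mathbb{K}$, and any conjugate of $\mathfrak{T}$ has trivial kernel.

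For the \emph{if} direction I would argue that it suffices to conjugate $\mathcal{A}$ \emph{into} $\mathfrak{T}$. A conjugate $\phi\mathfrak{T}\phi^{-1}$ is again a Lie subalgebra of $\text{LND}(\mathbb{K}[x_1,\dots,x_n])$, because local nilpotence is preserved by $(\phi D\phi^{-1})^k=\phi D^k\phi^{-1}$; hence if $\phi^{-1}\mathcal{A}\phi\subseteq\mathfrak{T}$, then maximality of $\mathcal{A}$ (which passes to the conjugate $\phi^{-1}\mathcal{A}\phi$) forces equality. So the whole problem reduces to triangularizing $\mathcal{A}$, which I would carry out by induction on $n$, the case $n=1$ being $\text{LND}(\mathbb{K}[x_1])=\mathbb{K}\partial x_1=\mathfrak{T}$. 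The inductive step rests on producing an $\mathcal{A}$-invariant coordinate subring: an abelian ideal $\mathfrak{a}\subseteq\mathcal{A}$ whose common kernel $B:=\ker\mathfrak{a}$ is a polynomial ring in $n-1$ variables with $R=B[t]$ for a variable $t$, with $\mathfrak{a}=B\,\partial_t$ and $\mathcal{A}\,B\subseteq B$. The natural candidate, modelled on $\partial x_n$ (which is central in $\mathfrak{T}$), is the abelian ideal attached to a central locally nilpotent derivation $D_0$ admitting a slice $t$: centrality gives $D_0(Df)=D(D_0 f)=0$ for $D\in\mathcal{A}$ and $f\in B=\ker D_0$, so $\mathcal{A}$ preserves $B$.

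Granting $B$, one step is purely formal, namely that $\mathcal{A}$ already acts triangularly on the top variable, i.e. $Dt\in B$ for every $D\in\mathcal{A}$. Indeed, since $\partial_t\in\mathfrak{a}$ and $\mathfrak{a}$ is an ideal, $[\partial_t,D]=c\,\partial_t$ with $c\in B$; evaluating at $t$ gives $\partial_t(Dt)=c\in B$, whence $Dt=b_0+b_1 t$ with $b_0,b_1\in B$. If $b_1\neq 0$, then writing $D^k t=P_k+Q_k t$ one gets the recursion $Q_{k+1}=\bar D Q_k+b_1 Q_k$ with $\bar D=D|_B$ and $Q_1=b_1$; comparing $\bar D$-degrees ($\deg\bar D Q_k\le\deg Q_k-1$ while $\deg(b_1Q_k)=\deg b_1+\deg Q_k$) shows the leading term never cancels, so $Q_k\neq 0$ for all $k$, contradicting local nilpotence of $D$. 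Hence $b_1=0$ and $Dt\in B$. Restricting to $B$ then yields $\overline{\mathcal{A}}=\mathcal{A}|_B\subseteq\text{LND}(B)$ with $\ker\overline{\mathcal{A}}=B\cap\ker\mathcal{A}=\mathbb{K}$, which I would show is again maximal; the induction hypothesis provides an automorphism $\bar\phi$ of $B$ triangularizing $\overline{\mathcal{A}}$, and extending $\bar\phi$ to $R=B[t]$ by $\bar\phi(t)=t$ places $\mathcal{A}$ inside the triangular algebra in the coordinates furnished by $\bar\phi$ together with $t$. Maximality of $\mathcal{A}$ then upgrades this inclusion to equality.

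The main obstacle is the construction of the invariant coordinate subring $B$ in the inductive step, and in particular the proof that $B$ is genuinely a polynomial ring in $n-1$ variables rather than merely a corank-one retract of $R$: here one cannot invoke cancellation in general and must instead extract from $\ker\mathcal{A}=\mathbb{K}$ together with the maximality of $\mathcal{A}$ both the existence of the central derivation $D_0$ with a slice and the polynomiality of $\ker D_0$. A secondary difficulty, genuinely delicate because locally nilpotent derivations are not closed under addition, is verifying that the restricted algebra $\overline{\mathcal{A}}$ inherits maximality, so that the induction can be launched.
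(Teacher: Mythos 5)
Your reduction of the theorem to the inclusion $\mathcal{A}\subseteq\phi\,\mathfrak{T}\,\phi^{-1}$ (maximality then forcing equality) is correct and matches the paper, and your formal lemma that $Dt\in B$ once $\mathcal{A}$ preserves $B$ is sound. But the entire substance of the theorem sits in the step you defer: producing, inside a maximal $\mathcal{A}$ with $\ker\mathcal{A}=\mathbb{K}$, a central locally nilpotent derivation $D_0$ with a slice whose kernel is a polynomial ring in $n-1$ variables. None of the three ingredients is available. (i) Nothing in the hypotheses yields a nonzero center of $\mathcal{A}$, let alone a central element admitting a slice; a priori you only know such an element exists \emph{after} the theorem is proved (the center of $\mathfrak{T}$ is $\mathbb{K}\partial x_n$). (ii) Even granted $D_0$ with slice $t$, the Slice Theorem gives only $\mathbb{K}[x_1,\ldots,x_n]=(\ker D_0)[t]$, and deducing that $\ker D_0$ is a polynomial ring in $n-1$ variables is an instance of the Zariski cancellation problem, open in characteristic zero for $n-1\geq 3$; you acknowledge this obstacle but offer no way around it. (iii) The claim that $\mathcal{A}|_{B}$ inherits maximality, needed to run your induction, is also unproved and is not obvious. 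Even the preliminary assertion $\mathfrak{a}=B\partial_t\subseteq\mathcal{A}$ needs an argument: it requires showing that sums $D+fD_0$ are again locally nilpotent, which is exactly the kind of closure property that fails in general for locally nilpotent derivations. So the proposal, as it stands, reduces the theorem to statements that are essentially as hard as (or harder than) the theorem itself.

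For contrast, the paper takes a route that avoids centers, slices, cancellation, and induction on $n$ altogether. It introduces the notion of a \emph{locally nilpotent set} of derivations and proves two things: first (Theorem 5, Lemma 1, Lemma 4, plus Zorn's lemma) that every Lie algebra inside $\text{LND}(B)$ is automatically a locally nilpotent set --- the key point being that $\mathrm{ad}\,S$ is a locally nilpotent set of operators on $\text{Der}(B)$, so a maximal locally nilpotent subset $S\subseteq\mathcal{A}$ can always be enlarged unless $S=\mathcal{A}$; second (Theorem 3) that any locally nilpotent set $S$ with $\ker S=\mathbb{K}$ triangularizes: the coordinate system is built from the bottom up by a Zorn argument, at each stage adjoining an element $b\notin A$ with $Sb\subseteq A$ (Lemma 1) and proving directly, using local nilpotence, that $b$ is transcendental over the subalgebra $A$ already constructed. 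This constructs the new variables one at a time and never requires knowing the structure of the kernel of any single derivation, which is precisely what makes the argument go through where your inductive scheme stalls.
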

    
    Автор выражает благодарность И.В. Аржанцеву за постановку задачи и внимание к работе.

    \section{Используемые утверждения}
    
    Будем использовать следующие обозначения:
    
    ~--- Для каждой алгебры Ли $\mathcal{A}$ определим $d(\mathcal{A})$ как её ступень разрешимости. Также, обозначим за $\mathcal{A}^{(i)}$~-- $i$-ый коммутант алгебры Ли $\mathcal{A}$;
    
    ~--- Для произвольного семейства линейных операторов $S$ векторного пространства $V$, ядром $\ker S$ этого семейства будем называть пересечения ядер всех операторов, лежащих в множестве $S$;
    
    ~--- Для произвольных двух семейств линейных операторов $S_1$, $S_2$ векторного пространства $V$ обозначим за $[S_1, S_2]$~-- множество всевозможных коммутаторов вида $[A, B] = AB - BA$, где $A\in S_1$, $B\in S_2$;
    
    ~--- Для произвольного семейства линейных операторов $S$ векторного пространства $V$ и произвольного элемента $v\in V$ обозначим за $S(v)$ множество элементов вида $A(v)$, где $A$--всевозможные операторы из $S$;
    
    ~--- Для произвольного семейства линейных операторов $S$ векторного пространства $V$ обозначим за $\text{ad}S$ множество присоединённых эндоморфизмов $\{\text{ad}A: X\to [A, X]| A\in S, X\in\text{End}(V)\}$ векторного пространства $\text{End}(V)$.
    
    \begin{definition}
        Линейный оператор $A$ на векторном пространстве $V$ называется \textit{локально нильпотентным}, если для каждого вектора $v$ из $V$ найдётся натуральное число $k = k(v)>0$ такое, что $A^k(v) = 0$.
    \end{definition}
    
    \begin{definition}
        
        Множество линейных операторов $T$ на векторном пространстве $V$ будем называть \textit{локально нильпотентным}, если для любой бесконечной последовательности операторов $A_1, A_2,\ldots$, лежащих в $T$, и любого вектора $v\in V$, существует такое $k=k(v, \{A_i\})>0$, что выполнено равенство $A_k\ldots A_1(v) = 0$.
        
    \end{definition}
    
    \begin{lm}
        Пусть $T$~-- локально нильпотентное множество линейных операторов на векторном пространстве $V$. Тогда для произвольного собственного векторного подпространства $U\subset V$ найдётся элемент $v\in V\setminus U$, для которого $T(v)\subseteq U$.
        
    \end{lm}
    
    \begin{proof}
        Рассмотрим произвольный вектор $v$ лежащий в множестве $V\setminus U$. Из того, что множество $T$ является локально нильпотентным имеем, что для некоторого конечного числа элементов $A_1$, $A_2$, $\ldots$, $A_k$, лежащих в множестве $T$, элемент $A_k\ldots A_1(v)$ не принадлежит множеству $U$, однако, элементы $TA_k\ldots A_1(x)$ лежат в множестве $U$. Теперь, в качестве искомого элемента достаточно рассмотреть элемент $v'=A_k\ldots A_1(v)$.
    \end{proof}
    
    \begin{lm}
        
        Рассмотрим произвольное локально нильпотентное множество линейных операторов $T$ на векторном пространстве $V$. Тогда для любого локально нильпотентного оператора $A$ на векторном пространстве $V$ такого, что $[A, T]\subseteq T$, имеем $T\cup\{A\}$~-- локально нильпотентное множество линейных операторов на $V$.
        
    \end{lm}
    
    \begin{proof}
        
    Пусть $U$~-- максимальное по вложению линейное подпространство $V$ инвариантное для каждого линейного оператора из множества $T\cup\{A\}$ такое, что ограничение $T\cup\{A\}$ на $U$ является локально нильпотентным множеством линейных операторов. Предположим, что $U\not= V$. По лемме 1 существует элемент $v\in V\setminus U$ такой, что $Tv\subseteq U$. Несложно проверить, что линейное пространство $U'=\left<T, v, Av, A^2v,\ldots \right>$ является инвариантным для каждого линейного поератора из $T\cup\{A\}$ и ограничение $T\cup\{A\}$ на $U'$ является локально нильпотентным множеством линейных операторов. Противоречие к максимальности $U$.
    \end{proof}
    
    Каждое локально нильпотентное дифференцирование алгебры $B$ является также локально нильпотентным линейным оператором на векторном пространстве $B_{\mathbb{K}}$. Поэтому, все предыдущие леммы о локально нильпотентных операторах переносятся на локально нильпотентные дифференцирования, причем, в случае с дифференцированиями алгебры $B$, локально нильпотентным множеством дифференцирований алгебры $B$ называется произвольное множество дифференцирований алгебры $B$ являющееся локально нильпотентным множеством линейных операторов на векторном пространстве $B_{\mathbb{K}}$.
    
    \begin{lm}
        
        Множество $\mathbb{K} \partial x_1\oplus \ldots \oplus \mathbb{K}[x_1, \ldots , x_{n-1}]\partial x_n$ является локально нильпотентным множеством дифференцирований алгебры $\mathbb{K}[x_1,\ldots, x_n]$.
        
    \end{lm}
    
    \begin{proof}
        
        Упорядочим переменные $x_1\prec\ldots\prec x_n$ и введём лексикографический порядок на мономах. При действии дифференцирований из $\mathbb{K}\partial x_1\oplus \ldots \oplus \mathbb{K}[x_1, \ldots , x_{n-1}]\partial x_n$ на произвольный многочлен, порядок старшего монома этого многочлена уменьшается, откуда получаем доказательство леммы. \end{proof}
    
    \begin{lm}
        
        Рассмотрим произвольное локально нильпотентное множество дифференцирований $S$ алгебры $B$. Тогда для любого конечного множества локально нильпотентных дифференцирований $D_1, D_2,\ldots, D_k$ такого, что $[S\cup\{D_1, D_2, \ldots, D_k\}, S\cup\{D_1, D_2, \ldots, D_k\}]\subseteq S$, имеем $S\cup\{D_1, D_2, \ldots, D_k\}$~-- локально нильпотентное подмножество дифференцирований алгебры $B$.
        
    \end{lm}
    
    \begin{proof}
        Для доказательсва леммы 4 достаточно воспользоваться индукцией по $k$ и леммой 2.
    \end{proof}
    
    \begin{lm}
        
        Рассмотрим произвольное подмножество $S$, лежащее в $\text{Der}(B)$. Тогда для некоторого конечного числа элементов $D_1, \ldots , D_k\in S$, $\ker S = \cap_{i=1}^k\ker D_i$. Более того, $k$ можно выбрать равным $\text{tr.deg.}(B)-\text{tr.deg.}(\ker S)$.
        
    \end{lm}
    
    \begin{proof}
        
        Предположим противное. Тогда найдётся счётное семейство элементов $\{D_i\in S\}_{i=1}^{\infty}$ такое, что $\cap_{i=1}^{l+1}\ker D_i\subsetneqq\cap_{j=1}^l\ker D_j$, для всех $l$. Построим бесконечную цепочку элементов $\{x_l\in (\cap_{i=1}^l\ker D_i)\setminus (\cap_{j=1}^{l+1}\ker D_j)\}_{l=1}^{\infty}$. Применяя \cite[Prop. 1.8(d); стр. 17]{Fr}, получаем, что $x_1, x_2,\ldots$ образует бесконечное алгебраически независимое семейство элементов, что противоречит условию того, что $\text{tr.deg.}(B)<\infty$.
    \end{proof}
    
    \begin{lm}
        
        Любая коммутативная алгебра Ли $\mathcal{A}$, лежащая в $\text{LND}(B)$, является конечномерной алгеброй Ли размерности не более чем $\text{tr.deg.}_{\mathbb{K}}(B)$.
        
    \end{lm}
    
    \begin{proof}
        
        Это утверждение является следствием известного факта о том, что не существует более чем $\text{tr.deg.}B$ линейно независимых локально нильпотентных дифференцирований алгебры $B$.
        %Индукция по степени трансцендентности алгебры $B$. В случае $\text{tr.deg.}(B) = 0$, имеем $\text{LND}(B)=\{0\}$, $\mathcal{A}=\{0\}$ и утверждение леммы очевидно.
    %По лемме 1 существует элемент $s\in B\setminus\mathbb{K}$ такой, что $\mathcal{A}(s)\subseteq\mathbb{K}$. Из того, что $s\notin\mathbb{K} = \ker\mathcal{A}$ следует, что для некоторого $D\in\mathcal{A}$, $D(s)\in\mathbb{K}\setminus\{0\}$. Рассмотрим подалгебру Ли $\mathcal{B} = \{\partial\in\mathcal{A}| \ker D\subseteq\ker\partial\}$. Алгебра Ли $\mathcal{A}$ является абелевой, поэтому $\ker D$ является инвариантной алгеброй для всех дифференцирований из $\mathcal{A}$. По теореме о гомоморфизме $\mathcal{A}/\mathcal{B}\cong\mathcal{A}|_{\ker D}$~-- коммутативная алгебра Ли локально нильпотентных дифференцирований на алгебре $\ker D$ с тривиальным ядром, причем $\text{tr.deg.}_{\mathbb{K}}(\ker D) = \text{tr.deg.}_{mathbb{K}}(B) - 1$. Поэтому, по предположению индукции $\dim\mathcal{B} = \text{tr.deg.}_{mathbb{K}}(B) - 1$. Для каждого элемента $E\in\mathcal{B}$ имеем $\ker D\subseteq\ker S$, а значит по \cite[Prop. 1.8(d); стр. 17]{Fr}, $\ker D = \ker S$ и $s\notin\ker E$. Таким образом, $E(s)\in\mathbb{K}\setminus\{0\}$ и из \cite[Prop. 1.8(d); стр. 17]{Fr} получаем, что $B = (\ker D)[s]$, $D, E\in\mathbb{K}\partial s$, откуда $E\in\mathbb{K}D$, для всех $E\in\mathcal{B}$ и $\dim\mathcal{B} = 1$. Значит, $\dim\mathcal{A} = \dim(\mathcal{A}/\mathcal{B}) + \dim\mathcal{B} = \text{tr.deg.}_{\mathbb{K}}(B)$.
    \end{proof}
    
    \section{Опровержение гипотезы 2 в общем случае}
    
    \begin{thm}
        Пусть $D$~-- локально нильпотентное дифференцирование алгебры многочленов \\$\mathbb{K}[x_1,\ldots , x_n]$ такое, что абелева алгебра Ли $(\ker D)D$ является подалгеброй алгебры Ли \\$\mathbb{K}\partial x_1\oplus \ldots \oplus \mathbb{K}[x_1, \ldots , x_{n-1}]\partial x_n$. Тогда $\ker D = \mathbb{K}[x_1,\ldots, x_{n-1}]$ и для некоторого многочлена $f$, лежащего в алгебре $\mathbb{K}[x_1,\ldots, x_{n-1}]$, выполнено $D = f\partial x_n$.
    \end{thm}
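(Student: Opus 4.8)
The plan is to reduce everything to two observations about triangular derivations together with one transcendence-degree count. First I would note that $1 \in \ker D$, so taking $g = 1$ shows $D = 1\cdot D \in (\ker D)D \subseteq \mathfrak{T}$; hence $D$ is itself triangular, $D = \sum_{i=1}^n p_i\,\partial x_i$ with $p_i \in \mathbb{K}[x_1,\ldots,x_{i-1}]$ (convention $\mathbb{K}[x_1,\ldots,x_0]=\mathbb{K}$). I work under the tacit hypothesis $D \neq 0$, which is forced by the conclusion. The assumption then reads: for every $g \in \ker D$ the derivation $gD = \sum_i (g p_i)\,\partial x_i$ again lies in $\mathfrak{T}$, i.e. $g p_i \in \mathbb{K}[x_1,\ldots,x_{i-1}]$ for all $i$.

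The crucial step I would carry out next is to convert this into a containment for $\ker D$. Fixing an index $i$ with $p_i \neq 0$ and expanding an arbitrary $g \in \ker D$ as $g = \sum_\alpha c_\alpha m_\alpha$, where $m_\alpha$ runs over the monomials in $x_i, \ldots, x_n$ with coefficients $c_\alpha \in \mathbb{K}[x_1,\ldots,x_{i-1}]$, I observe that multiplication by $p_i$ acts coefficientwise, so $g p_i \in \mathbb{K}[x_1,\ldots,x_{i-1}]$ forces $c_\alpha p_i = 0$, hence $c_\alpha = 0$, for every nonconstant $m_\alpha$. This yields the key implication: $p_i \neq 0 \Rightarrow \ker D \subseteq \mathbb{K}[x_1,\ldots,x_{i-1}]$.

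Finally I would let $j$ be the least index with $p_j \neq 0$ (it exists since $D \neq 0$) and apply the implication to get $\ker D \subseteq \mathbb{K}[x_1,\ldots,x_{j-1}]$, so $\text{tr.deg}_{\mathbb{K}}(\ker D) \le j-1$. Since $\text{tr.deg}_{\mathbb{K}}(\ker D) = n-1$ for any nonzero locally nilpotent derivation of $\mathbb{K}[x_1,\ldots,x_n]$ (see \cite{Fr}), this gives $j \ge n$, whence $j = n$. Thus $p_1 = \cdots = p_{n-1} = 0$ and $D = p_n\,\partial x_n$ with $0 \neq p_n \in \mathbb{K}[x_1,\ldots,x_{n-1}]$; taking $f = p_n$ settles the second claim, and since the ring is a domain, $D(q)=0$ is equivalent to $\partial x_n\,q = 0$, giving $\ker D = \mathbb{K}[x_1,\ldots,x_{n-1}]$, the first claim.

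The one point I expect to need care is the coefficientwise argument of the second paragraph --- really the integral-domain/independence property guaranteeing that multiplying by the lower-variable polynomial $p_i$ cannot destroy the monomials of $g$ that genuinely involve $x_i, \ldots, x_n$ --- together with invoking the transcendence-degree formula for $\ker D$ in the correct direction. Everything else is bookkeeping.
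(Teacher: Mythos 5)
Your proposal is correct and follows essentially the same route as the paper: both deduce from $1\in\ker D$ that $D$ itself is triangular, then use the condition $(\ker D)p_j\subseteq\mathbb{K}[x_1,\ldots,x_{j-1}]$ at the least index $j$ with $p_j\neq 0$ (via the same integral-domain, coefficientwise multiplication argument) to get $\ker D\subseteq\mathbb{K}[x_1,\ldots,x_{j-1}]$, and finally pin down $j=n$ using $\mathrm{tr.deg}_{\mathbb{K}}(\ker D)=n-1$ --- a fact the paper invokes implicitly when it asserts $i=n-1$, and which you state explicitly with citation. The only differences are bookkeeping: the paper first establishes the equality $\ker D=\mathbb{K}[x_1,\ldots,x_i]$ from both inclusions and then identifies the index, whereas you identify the index first and compute the kernel last.
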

    \begin{proof}
        
        По условию теоремы можем считать, что $D$ имеет вид $f_i\partial x_{i+1} +\ldots + f_{n-1}\partial x_n$, для некоторых $f_j\in\mathbb{K}[x_1, \ldots , x_j]$, причем $f_i\not= 0$. Также, по условию для каждого $a$ из $\ker D$, $aD = a f_i\partial x_{i+1} +\ldots + a f_{n-1}\partial x_n\in\mathbb{K}\partial x_1\oplus \ldots \oplus \mathbb{K}[x_1, \ldots , x_{n-1}]\partial x_n$, поэтому $(\ker D)f_i\subseteq \mathbb{K}[x_1, \ldots , x_i]$ и $\ker  D\subseteq \mathbb{K}[x_1, \ldots , x_i]$. С другой стороны $\mathbb{K}[x_1, \ldots , x_i]\subseteq\ker D$, а значит $\ker D = \mathbb{K}[x_1, \ldots , x_i]$ и $i=n-1$. Откуда $D = f_{n-1}\partial x_n$.
    \end{proof}
    
    \begin{cons}
        Гипотеза 2 не верна в случае $n=3$.
    \end{cons}
    \begin{proof}
        Рассмотрим локально нильпотентное дифференцирование $D = x\partial y + y\partial z$ алгебры многочленов $\mathbb{K}[x, y, z]$. В случае справедливости гипотезы 2 получаем, что для некоторых алгебраически независимых многочленов $x', y', z'$ выполнено $\mathbb{K}[x, y, z] = \mathbb{K}[x', y', z']$ и $(\ker D)D\subseteq\mathbb{K}\partial x'\oplus\mathbb{K}[x']\partial y\oplus\mathbb{K}[x', y']\partial z'$. Применяя теорему 2 к дифферренцированию $D$ получаем, что $\ker D = \mathbb{K}[x', y']$ и $\mathbb{K}[x, y, z]  = \mathbb{K}[x', y', z'] = (\ker D)[z'] =\mathbb{K}[x, 2xz - y^2, z']$. Что противоречит тому, что $\mathbb{K}[x, 2xz - y^2]$ является подалгеброй многочленов от двух переменных которую нельзя дополнить до алгебры $\mathbb{K}[x, y, z]$ присоединив ровно один многочлен.
    \end{proof}

    \section{Основные свойства локально нильпотентных множеств дифференцирований}
    
    \begin{thm}
        
        Пусть $S$~-- произвольное локально нильпотентное множество дифференцирований алгебры $B$, для кототорого известно, что $\ker S = \mathbb{K}$. Тогда найдутся $x_i$ такие, что $B = \mathbb{K}[x_1,\ldots , x_n]$ и $S\subseteq \mathbb{K}\partial x_1\oplus \ldots \oplus \mathbb{K}[x_1, \ldots , x_{n-1}]\partial x_n$.
        
    \end{thm}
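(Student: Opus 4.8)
The plan is to build, using only Lemma~1 and the transcendence-degree bookkeeping of Lemma~5, an ascending chain of $S$-stable subalgebras $\mathbb{K}=A_0\subset A_1\subset\cdots\subset A_N=B$, each a polynomial ring, such that $S(A_k)\subseteq A_k$ and, for the generators $x$ adjoined at stage $k$, $S(x)\subseteq A_{k-1}$. Concretely I would set $V_k=\{b\in B: S(b)\subseteq A_{k-1}\}$ and $A_k=\mathbb{K}[V_k]$. A direct Leibniz check shows $A_k$ is again $S$-stable (if $S(b),S(b')\subseteq A_{k-1}$ then $S(bb')\subseteq A_k$), and since $A_{k-1}$ is $S$-stable we have $A_{k-1}\subseteq V_k$; Lemma~1, applied to the proper subspace $A_{k-1}\subsetneq B$, produces an element of $V_k\setminus A_{k-1}$, so each inclusion $A_{k-1}\subset A_k$ is strict as long as $A_{k-1}\neq B$. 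Ordering the generators adjoined stage by stage as $x_1,\ldots,x_n$, the condition $S(x)\subseteq A_{k-1}$ says exactly that $S\subseteq\mathbb{K}\partial x_1\oplus\cdots\oplus\mathbb{K}[x_1,\ldots,x_{n-1}]\partial x_n$, so the flag itself delivers the conclusion once I know it is a polynomial flag reaching $B$.

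The first stage already shows the mechanism. Here $V_1=\{b:S(b)\subseteq\mathbb{K}\}$. First, $\mathbb{K}$ is algebraically closed in $B$: any $b$ algebraic over $\mathbb{K}$ is killed by every locally nilpotent derivation (in characteristic zero, differentiating its minimal polynomial forces $D(b)=0$), hence $b\in\ker S=\mathbb{K}$. Next, elements of $V_1$ linearly independent modulo $\mathbb{K}$ are algebraically independent: if $P(v_1,\ldots,v_r)=0$ is a minimal relation, then choosing finitely many $D_1,\ldots,D_n\in S$ with $\bigcap_i\ker D_i=\mathbb{K}$ (Lemma~5) and using $D_i(v_j)\in\mathbb{K}$, the relations $\sum_j\partial_jP(v)\,D_i(v_j)=0$ form a linear system whose matrix $(D_i(v_j))_{i,j}$ has full column rank (the map $V_1/\mathbb{K}\to\mathbb{K}^n$, $v\mapsto(D_i(v))_i$, is injective), forcing every $\partial_jP(v)=0$ and contradicting minimality. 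Thus $A_1=\mathbb{K}[V_1]$ is a genuine polynomial ring. I would also record the slice-theoretic shadow of this step: choosing $D_1\in S$ with $D_1(v)\in\mathbb{K}^{\times}$ makes $v$ (after scaling) a slice for $D_1$, so $B=\ker(D_1)[v]$ is polynomial over $\ker D_1$ — the single-derivation source of polynomiality.

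For $k\geq2$ the analogous transcendence statement is exactly where the difficulty concentrates. The clean Jacobian argument above breaks, because the ``constants'' $D(v_j)$ are now merely elements of $A_{k-1}$, and one must instead show that every $v\in V_k\setminus A_{k-1}$ is transcendental over $A_{k-1}$. The way I would secure this is to prove that each stage $A_k$ is \emph{algebraically closed} in $B$, so that an element algebraic over $A_{k-1}$ would already lie in $A_{k-1}$. This rests on the structural fact that kernels of locally nilpotent derivations are algebraically (indeed factorially) closed in $B$, together with $\ker S=\mathbb{K}$. Establishing algebraic closedness of the $A_k$ is the main obstacle: it is the only place beyond Lemmas~1 and~5 where genuine input about locally nilpotent derivations is required, and one must resist the naive reduction ``restrict to $\ker D_1$,'' which can destroy the hypothesis $\ker=\mathbb{K}$.

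Granting algebraic closedness, the proof closes quickly. Transcendence degree then strictly increases at every stage, so by Lemma~5 (which bounds everything by $\operatorname{tr.deg}B=n$) the chain stabilizes after finitely many steps at some $A_N$; an algebraically closed subalgebra of transcendence degree $n$ contains every element of $B$ (each such element being algebraic over it) and hence equals $B$, so $A_N=B$. This exhibits $B=\mathbb{K}[x_1,\ldots,x_n]$ as a polynomial ring and places $S$ inside $\mathbb{K}\partial x_1\oplus\cdots\oplus\mathbb{K}[x_1,\ldots,x_{n-1}]\partial x_n$, as required.
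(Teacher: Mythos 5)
Your scaffolding is essentially the paper's: it too builds a flag of $S$-stable polynomial subalgebras on which $S$ restricts triangularly, and uses Lemma~1 to produce, for a maximal such subalgebra $A\neq B$, an element $b\in B\setminus A$ with $S(b)\subseteq A$ (the only structural difference is that the paper adjoins one element at a time, taking a maximal member of the family of such subalgebras, rather than adjoining the whole space $V_k$ at once). But the step you yourself flag as ``the main obstacle'' --- that such a $b$ is transcendental over the current stage --- is precisely where the paper's proof does its real work, and your proposed route to it (algebraic closedness of each $A_k$ in $B$) is neither established nor readily available: $A_k=\mathbb{K}[V_k]$ is not exhibited as an intersection of kernels of locally nilpotent derivations, so factorial closedness of kernels of single LNDs does not apply to it, and its algebraic closedness in $B$ becomes evident only a posteriori, once the theorem itself identifies $A_k$ with some $\mathbb{K}[x_1,\ldots,x_m]$. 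As written, the proposal is therefore incomplete (indeed близко to circular) at the one point that matters; your correct stage-one Jacobian argument does not extend, as you note.

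The missing step needs much less than algebraic closedness in $B$, and this is the paper's key observation: one only has to handle elements $b$ with $S(b)\subseteq A$. Suppose $a_0+a_1b+\cdots+a_db^d=0$ with $a_i\in A$, $a_d\neq 0$ and $d>0$ minimal. Applying any $D\in S$ and using $D(a_i),D(b)\in A$ gives another annihilating relation of degree at most $d$ whose leading coefficient is $D(a_d)$; choosing $D$ with $D(a_d)\neq 0$ whenever $a_d\notin\mathbb{K}$ and iterating, the local nilpotency of $S$ \emph{as a set} guarantees that after finitely many applications the leading coefficient lands in $\ker S\setminus\{0\}=\mathbb{K}^{\times}$. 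With $a_d\in\mathbb{K}^{\times}$, applying $D$ kills the $b^d$ term, and minimality of $d$ forces every coefficient of the differentiated relation to vanish; in particular $D(a_{d-1}+da_db)=0$ for all $D\in S$, hence $a_{d-1}+da_db\in\ker S=\mathbb{K}$ and $b\in A$, a contradiction. Note that this uses exactly the three hypotheses available at every stage ($S(b)\subseteq A$, local nilpotency of the set $S$, and $\ker S=\mathbb{K}$) and nothing about $A$ being algebraically closed in $B$; it also dissolves your secondary difficulty about joint algebraic independence of several new generators, since it permits adjoining elements one at a time.
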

    
    \begin{proof}
        
        Рассмотрим семейство $\mathcal{P}$ подалгебр $A$ алгебры $B$ обладающих следующими свойствами:
        
        \begin{enumerate}
            \item $A$ инвариантна для каждого дифференцирования из $S$;
            \item Существуют алгебраически независимые элементы $a_1, a_2,\ldots , a_k$ такие, что $A=\mathbb{K}[a_1,\ldots , a_k]$ и $S|_A\subseteq\mathbb{K}\partial a_1\oplus \ldots \oplus \mathbb{K}[a_1, \ldots , a_{k-1}]\partial a_k$.
        \end{enumerate}
        
        Введём частичный порядок на $\mathcal{P}$. Будем говорить, что для двух подалгебр $A_1, A_2\in\mathcal{P}$ выполнено $A_1\prec A_2$ тогда и только тогда, когда существуют элементы $a_{1}, a_{2},\ldots , a_{k_1}$ и $a_{k_1+1}, a_{k_1+2},\ldots ,$ $a_{k_2}$ такие, что $A_1 = \mathbb{K}[a_{1}, a_{2},\ldots , a_{k_1}], A_2 = A_1[a_{k_1+1}, a_{k_1+2},\ldots , a_{k_2}] = \mathbb{K}[a_{1},\ldots , a_{k_2}]$ и $S|_{A_2}\subseteq\mathbb{K}\partial a_{1}\oplus \ldots \oplus \mathbb{K}[a_{1}, \ldots , a_{k_2-1}]\partial a_{k_2}$.
        Яcно, что не существует бесконечно возрастающей цепочки подалгебр лежащих в $\mathcal{P}$, поэтому можно выбрать максимальную подалгебру $A$ из $\mathcal{P}$. Предположим, что $A\not= B$ и выберем произвольный элемент $b\in B\setminus A$ такой, что $Sb\subseteq A$ (такой элемент существует по лемме 1).
        Проверим, что $b$ алгебраически независим над $A$. Предположим противное: пусть для некоторых элементов $a_0, a_1, \ldots, a_d\in A$ выполнено $a_0 + a_1b+\ldots + a_db^d = 0$, причем $a_d\not= 0$. Можем считать, что $d>0$~-- минимально возможная степень аннулирующего многочлена. Заметим, что по построению алгебры $A$ и элемента $b$, элементы $D(a_j), D(b)$ лежат в $A$ для всех $a_j$ и всех дифференцирований $D$ лежащих в $S$. Имеем $0=D(a_0 + a_1b+\ldots + a_db^d) = (D(a_0)+a_1D(b))+\ldots + (D(a_{d-1})+da_dD(b))b^{d-1} + D(a_d)b^d$ для всех $D\in S$. Поэтому, в случае $a_d\notin\mathbb{K}$, подставляя в последнем равенстве вместо элемента $D\in S$ такой элемент $D'\in S$, что $a_d\notin\ker D'$, получим аннулирующий многочлен элемента $b$ степени $d$ со старшим коэффициентом равным $D'(a_d)\not=0$. Таким образом, из локальной нильпотентности множества дифференцирований $S$, после выполнения некоторого конечного числа таких операций получаем, что $b$ имеет аннулирующий многочлен степени $d$ со старшим коэффициентом лежащим в поле $\mathbb{K}$. Можем считать, что $a_0 + a_1b+\ldots + a_db^d$~-- искомый многочлен с $a_d\in\mathbb{K}\setminus\{0\}$. Заметим тогда, что для каждого дифференцирования $D$ из $S$ выполнено $0=D(a_0 + a_1b+\ldots + a_db^d) = (D(a_0)+a_1D(b))+\ldots + (D(a_{d-1})+da_dD(b))b^{d-1}$, а значит, из минимальности $d$, $D(a_{d-1})+da_dD(b) = D(a_{d-1}+da_db) = 0$ для всех $D\in S$. Откуда $a_{d-1}+da_db\in\mathbb{K}$ и $b\in A$, что противоречит определению элемента $b$.
        
        В этоге получаем, что $A[b]$ является большей алгеброй чем $A$, лежащей в $\mathcal{P}$. Противоречие. Значит алгебра $A$ совпадает с $B$ и для некоторых элементов $x_1, x_2,\ldots , x_n$, $B = \mathbb{K}[x_1,\ldots , x_n]$ и $S\subseteq \mathbb{K}\partial x_1\oplus \ldots \oplus \mathbb{K}[x_1, \ldots , x_{n-1}]\partial x_n$.
        \end{proof}
    
    \begin{cons}
        
        Пусть $S$~-- произвольное локально нильпотентное множество дифференцирований алгебры $B$. Тогда алгебра Ли $\mathcal{A}$ порожденная множеством $(\ker S)S$ является разрешимой алгеброй Ли, которая также является локально нильпотентным подмножеством дифференцирований алгебры $B$.
        
    \end{cons}
    
    \begin{proof}
        
        Переходя к алгебре $B'=(\ker S \setminus\{0\})^{-1}B$ над полем $(\ker S \setminus\{0\})^{-1}\mathbb{K}$ и локально нильпотентному подмножеству дифференцирований $S' = (\ker S\setminus\{0\})^{-1}S$, можем считать, что $\ker S = \mathbb{K}$. По теореме 3 найдутся $x_i$ такие, что $B = \mathbb{K}[x_1,\ldots , x_n]$ и $S\subseteq\mathbb{K}\partial x_1\oplus\ldots\oplus\mathbb{K}[x_1, \ldots , x_{n-1}]\partial x_n$. Утверждение следствия теперь следует из того, что $\text{d} (\mathbb{K}\partial x_1\oplus \ldots \oplus \mathbb{K}[x_1, \ldots , x_{n-1}]\partial x_n) = n$ и леммы 3. \end{proof}

\begin{thm}
    
    Множество дифференцирований $S$ алгебры $B$ является локально нильпотентным тогда и только тогда, когда каждое его конечное подмножество $S'\subseteq S$ является локально нильпотентным множеством дифференцирований алгебры $B$.
    
\end{thm}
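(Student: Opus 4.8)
\emph{Forward implication.} This direction is immediate from the definitions: if $S$ is locally nilpotent, then for any finite $S'\subseteq S$ every infinite sequence of operators drawn from $S'$ is in particular an infinite sequence drawn from $S$, so the defining annihilation condition is inherited and $S'$ is locally nilpotent. The content of the theorem is the converse, and the whole difficulty there is that a single infinite sequence from $S$ may involve infinitely many distinct operators, so the finite-subset hypothesis cannot be applied to such a sequence directly. My plan is to replace ``infinitely many operators'' by ``finitely many commuting directions at each layer'' by passing to the Lie algebra generated by $S$ and running an induction on its solvability length.

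\emph{Step 1: bounding the solvability length.} Put $N=\text{tr.deg}(B)-\text{tr.deg}(\ker S)$. For every finite $S'\subseteq S$ one has $\ker S'\supseteq\ker S$, hence $\text{tr.deg}(B)-\text{tr.deg}(\ker S')\le N$. Since $S'$ is locally nilpotent, Corollary 2 shows that the Lie algebra generated by $(\ker S')S'$ is solvable of derived length at most $N$ and consists of locally nilpotent derivations. Because $\mathbb{K}\subseteq\ker S'$ we have $S'\subseteq(\ker S')S'$, so the subalgebra $\langle S'\rangle$ satisfies $\langle S'\rangle^{(N)}=0$ as well. Now every $N$-fold commutator of elements of $S$ involves only finitely many of them and therefore lies in some $\langle S'\rangle^{(N)}=0$; consequently $\mathcal{A}^{(N)}=0$, i.e. $\mathcal{A}=\langle S\rangle$ is a solvable Lie subalgebra of $\text{LND}(B)$ of derived length at most $N$. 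Since $S\subseteq\mathcal{A}$, it suffices to prove that $\mathcal{A}$ is a locally nilpotent set.

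\emph{Step 2: induction on derived length.} I would prove, by induction on $d(\mathcal{A})$, that every solvable Lie subalgebra $\mathcal{A}\subseteq\text{LND}(B)$ that is a linear subspace is a locally nilpotent set. In the abelian base case $\mathcal{A}$ is finite-dimensional by Lemma 6, say with basis $D_1,\dots,D_m$; the finite set $\{D_1,\dots,D_m\}$ is locally nilpotent by Lemma 4 (all commutators vanish), so a König-type argument gives a uniform bound $M(v)$ after which every word of length $M(v)$ in the $D_i$ kills $v$, and since an element of $\mathcal{A}$ is a $\mathbb{K}$-linear combination of the $D_i$ with constant coefficients, expanding a product of $M(v)$ such elements into words of length $M(v)$ shows it annihilates $v$. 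For the inductive step, $\mathcal{A}'=[\mathcal{A},\mathcal{A}]$ is a locally nilpotent set by induction; $\mathcal{A}/\mathcal{A}'$ is abelian, so we may choose $C_1,\dots,C_k\in\mathcal{A}$ spanning it modulo $\mathcal{A}'$, and since $[\mathcal{A}'\cup\{C_i\},\mathcal{A}'\cup\{C_i\}]\subseteq\mathcal{A}'$, Lemma 4 shows $\mathcal{A}'\cup\{C_1,\dots,C_k\}$ is locally nilpotent.

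\emph{Main obstacle.} The step that is not formal is passing from the locally nilpotent set $\mathcal{A}'\cup\{C_1,\dots,C_k\}$ to the full linear span $\mathcal{A}=\mathcal{A}'+\mathbb{K}C_1+\dots+\mathbb{K}C_k$, because a sequence drawn from $\mathcal{A}$ mixes the $C_i$ with arbitrary, infinitely varying, elements of $\mathcal{A}'$. I expect to resolve this by upgrading Lemma 2 to its linear-span form: if $T$ is a locally nilpotent subspace and $A$ a locally nilpotent derivation with $[A,T]\subseteq T$, then $T+\mathbb{K}A$ is locally nilpotent. This should follow from the maximal-invariant-subspace argument of Lemma 2 — take the largest $(T+\mathbb{K}A)$-invariant subspace $U$ on which $T+\mathbb{K}A$ restricts to a locally nilpotent set; if $U\neq B$, apply Lemma 1 to the locally nilpotent set $T$ to produce $v\notin U$ with $Tv\subseteq U$, and enlarge $U$ by the finite chain $v,Av,A^2v,\dots$, the identity $TA=AT+[T,A]\subseteq AT+T$ keeping the enlargement invariant and contradicting maximality. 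Iterating this strengthened lemma over $C_1,\dots,C_k$ closes the induction. The two delicate points I anticipate are the verification of local nilpotence on the enlarged subspace (a Leibniz bookkeeping exploiting that $A$ is nilpotent on the new chain while $T$ sends it back into $U$) and the finite-dimensionality of each abelian layer $\mathcal{A}/\mathcal{A}'$, needed so that finitely many $C_i$ suffice; the latter I would extract from Lemma 6.
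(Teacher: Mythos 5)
Your forward implication, your Step 1 (bounding the derived length via Corollary 2 applied to finite subsets), and your diagnosis of the span-versus-union issue are all sound; indeed the $\mathbb{K}$-span of any locally nilpotent set is again locally nilpotent by a finite-branching (K\"{o}nig's lemma) argument, since each element of the span is a finite combination --- a point the paper itself passes over silently when it deduces that $\mathcal{A}$ is locally nilpotent from Lemma 4. The fatal gap is in Step 2, at exactly the point you flag at the end: the finite-dimensionality of the abelian layers, which you propose to ``extract from Lemma 6'', is not available and is in fact false. In the inductive step, $\mathcal{A}/[\mathcal{A},\mathcal{A}]$ is an abstract quotient, not a Lie subalgebra of $\text{LND}(B)$, so Lemma 6 says nothing about it; and it can genuinely be infinite-dimensional. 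Take $\mathcal{A}=\mathbb{K}[x]\partial_y+\mathbb{K}[x,y]\partial_z\subseteq\text{LND}(\mathbb{K}[x,y,z])$: this is a solvable Lie algebra of derived length $2$, its derived subalgebra is $[\mathcal{A},\mathcal{A}]=\mathbb{K}[x,y]\partial_z$ (the commutators $[f(x)\partial_y,k(x,y)\partial_z]=fk_y\partial_z$ already realize every element of $\mathbb{K}[x,y]\partial_z$), and $\mathcal{A}/[\mathcal{A},\mathcal{A}]\cong\mathbb{K}[x]\partial_y$ is infinite-dimensional over $\mathbb{K}$. So no finite set $C_1,\dots,C_k$ spans $\mathcal{A}$ modulo $[\mathcal{A},\mathcal{A}]$, Lemma 4 cannot be invoked, and your induction on derived length does not close --- even though this $\mathcal{A}$ lies inside the triangular algebra and \emph{is} a locally nilpotent set. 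Your base case has the same defect: an abelian subalgebra of $\text{LND}(B)$ need not be finite-dimensional over $\mathbb{K}$ (again $\mathbb{K}[x]\partial_y\subseteq\text{LND}(\mathbb{K}[x,y])$). Lemma 6 read literally would support you, but read literally it is false; the ``known fact'' it rests on bounds the number of locally nilpotent derivations linearly independent \emph{over $B$}, and finite $\mathbb{K}$-dimension can only be salvaged under the additional hypothesis that the common kernel of the commutative algebra is $\mathbb{K}$ --- a hypothesis your layers do not satisfy (in the example, $\ker(\mathbb{K}[x]\partial_y)=\mathbb{K}[x]$).

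This is precisely the difficulty the paper's proof is engineered to avoid, and the mechanism is worth noting. The paper inducts on $\text{tr.deg}(B)$, not on derived length; it first localizes at $\ker\mathcal{A}$, so that $\ker\mathcal{A}=\mathbb{K}$ over the new ground field (after this step your infinite-dimensional layer $\mathbb{K}[x]\partial_y$ collapses to the one-dimensional $\mathbb{K}(x)\partial_y$); and, crucially, it quotients not by $\mathcal{A}^{(1)}$ but by the larger ideal $\ker\pi$, where $\pi:\mathcal{A}\to\mathcal{A}|_{\ker\mathcal{A}^{(1)}}$ is restriction to the invariant subalgebra $\ker\mathcal{A}^{(1)}$. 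The quotient $\mathcal{A}/\ker\pi$ is thereby realized \emph{concretely} as a commutative algebra of locally nilpotent derivations of the smaller algebra $\ker\mathcal{A}^{(1)}$ with common kernel $\mathbb{K}$, which is exactly the situation where Lemma 6 legitimately yields finite codimension; the ideal $\ker\pi$ itself is then handled by the induction on transcendence degree, after inverting $\mathbb{K}[x_1]$, where $x_1\in\ker\mathcal{A}^{(1)}$ is produced by Theorem 3 applied to a suitable finite-dimensional solvable subalgebra $\mathcal{B}\subseteq\mathcal{A}$ (obtained from Lemma 5 and Corollary 2). If you want to rescue your plan, you must replace ``quotient by the derived subalgebra'' with ``quotient by the kernel of restriction to an invariant subalgebra'' and trade the induction on derived length for one on transcendence degree; as written, the layer-by-layer argument cannot be completed.
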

\begin{proof}
    Докажем, что если каждое конечное подмножество некоторого множества дифференцирований $S$ является локально нильпотентным, то и само $S$ является локально нильпотентным множеством дифференцирований.

    Ведём индукцию по степени трансцендентности алгебры $B$. В случае $\text{tr.deg.}_{\mathbb{K}}(B) = 0$, имеем $\text{LND}(B)=\{0\}, S = \{0\}$ и утверждение леммы очевидно.
    Из следствия 2 получаем, что алгебра Ли $\mathcal{A}$ порожденная множеством $S$ также обладает тем свойством, что каждое её конечное подмножество образует локально нильпотентное множество дифференцирований алгебры $B$. Поэтому можем считать, что $S = \mathcal{A}$~-- алгебра Ли. Рассматривая алгебру Ли $\mathcal{A}' = (\ker\mathcal{A}\setminus\{0\})^{-1}\mathcal{A}$ над полем $\mathbb{K}' = (\ker\mathcal{A}\setminus\{0\})^{-1}\mathbb{K}$ и алгебру $B' = (\ker\mathcal{A}\setminus\{0\})^{-1}B$ можем считать, что $\ker\mathcal{A} = \mathbb{K}$. Применяя следствие 2, а также лемму 5 к множествам дифференцирований $\mathcal{A}$, $\mathcal{A}^{(1)}$ получаем, что существует достаточно большая конечномерная разрешимая подалгебра Ли $\mathcal{B}\subset\mathcal{A}$ являющаяся локально нильпотентным множеством дифференцирований алгебры $B$ такая, что $\ker\mathcal{B} = \ker\mathcal{A} = \mathbb{K}$ и $\ker\mathcal{B}^{(1)} = \ker\mathcal{A}^{(1)}$. Поэтому из теоремы 3 получаем, что для некоторых элементов $x_i$, $B = \mathbb{K}[x_1,\ldots, x_n]$ и $\mathcal{B}\subseteq \mathbb{K}\partial x_1\oplus \ldots \oplus \mathbb{K}[x_1, \ldots , x_{n-1}]\partial x_n$. Таким образом, $\mathbb{K}[x_1] \subseteq \ker(\mathbb{K}\partial x_1\oplus \ldots \oplus \mathbb{K}[x_1, \ldots , x_{n-1}]\partial x_n)^{(1)}\subseteq \ker\mathcal{B}^{(1)} = \ker\mathcal{A}^{(1)}$. Пусть $\pi:\mathcal{A}\to\mathcal{A}|_{\ker\mathcal{A}^{(1)}}$~-- гомоморфизм ограничения алгебры Ли $\mathcal{A}$ на инвариантную подалгебру $\ker\mathcal{A}^{(1)}$. Понятно, что каждое дифференцирование, лежащее в $\ker\pi$, содержит $\ker\mathcal{A}^{(1)}$, а следовательно и $\mathbb{K}[x_1]$, в своём ядре. Рассмотрим алгебру $B' = (\mathbb{K}[x_1]\setminus\{0\})^{-1}B$ степени трансцендентности не более чем $\text{tr.deg}_{\mathbb{K}}(B)-1$, и алгебру Ли $\mathcal{P} = (\mathbb{K}[x_1]\setminus\{0\})^{-1}\ker\pi$. Алгебра Ли $\mathcal{P}$ является множеством дифференцирований алгебры $B'$ меньшей степени трансцендентности чем $B$, откуда по предположению индукции $\mathcal{P}$~-- локально нильпотентное множество дифференцирований алгебры $B'$, а значит, $\ker\pi$~-- локально нильпотентное множество дифференцирований алгебры $B$. Заметим, что $\mathcal{A}^{(1)}\subseteq\ker\pi$, поэтому $\mathcal{A}|_{\ker\mathcal{A}^{(1)}}\cong\mathcal{A}/\ker\pi$ является коммутативной алгеброй Ли лежащей в $\text{LND}(\ker\mathcal{A}^{(1)})$ (мы воспользовались условием теоремы для одноэлементных подмножеств). По лемме 6 получаем, что $\mathcal{A}/\ker\pi$~-- конечномерная алгебра Ли, откуда $\ker\pi$~-- локально нильпотентное множество дифференцирований алгебры $B$ имеющее конечную коразмерность в $\mathcal{A}$. Применяя лемму 4 к множеству $\ker\pi$ и произвольному конечному базису факторпространства $\mathcal{A}/\ker\pi$ получаем, что $\mathcal{A}$~-- локально нильпотентное множество дифференцирований алгебры $B$.
\end{proof}
\begin{cons}
    Пусть $S_1\subset S_2\subset\ldots$~-- возрастающая цепочка вложенных друг в друга локально нильпотентных множеств дифференцирований алгебры $B$. Тогда $\cup_{i\geq 1} S_i$ также является локально нильптентным множеством дифференцирований алгебры $B$.
\end{cons}
    \begin{thm}
            Пусть $S$~-- произвольное локально нильпотентное множество дифференцирований алгебры $B$. Тогда множество $ad(S)$ является локально нильпотентным множеством линейных операторов на векторном пространсве $\text{Der}(B)$.
    \end{thm}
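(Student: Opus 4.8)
The plan is to reduce to the triangular situation furnished by Theorem 3 and then to equip $\mathrm{Der}(B)$ with a monomial well-ordering that the adjoint action strictly lowers. First I would localise at the kernel, exactly as in the proof of Corollary~2: replacing $B$ by $B'=(\ker S\setminus\{0\})^{-1}B$ over $\mathbb{K}'=\mathrm{Frac}(\ker S)$ and $S$ by $S'=(\ker S\setminus\{0\})^{-1}S$, I may assume $\ker S'=\mathbb{K}'$, so Theorem~3 gives coordinates with $B'=\mathbb{K}'[y_1,\ldots,y_m]$ and $S'\subseteq\mathfrak{T}':=\mathbb{K}'\partial y_1\oplus\cdots\oplus\mathbb{K}'[y_1,\ldots,y_{m-1}]\partial y_m$. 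The extension-to-localisation map $\mathrm{Der}_{\mathbb{K}}(B)\to\mathrm{Der}_{\mathbb{K}}(B')$ is an injective homomorphism of Lie algebras, hence commutes with iterated brackets; therefore $\mathrm{ad}(D_k)\cdots\mathrm{ad}(D_1)(X)$ vanishes in $\mathrm{Der}(B)$ as soon as the same expression in the extensions vanishes in $\mathrm{Der}_{\mathbb{K}}(B')$. Each $D_i$ kills $\ker S$, so its extension lies in $\mathfrak{T}'$; the arbitrary $X$, however, need not preserve $\ker S$, so its extension is merely a $\mathbb{K}$-derivation of $B'$. Thus it suffices to prove that $\mathrm{ad}(\mathfrak{T}')$ is locally nilpotent on the \emph{whole} space $\mathrm{Der}_{\mathbb{K}}(B')$ of $\mathbb{K}$-derivations.

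Fix a transcendence basis $t_1,\ldots,t_r$ of $\mathbb{K}'/\mathbb{K}$, which is finite since $r=\mathrm{tr.deg}_{\mathbb{K}}\ker S<\infty$ and is separating because $\mathrm{char}\,\mathbb{K}=0$; then $\mathrm{Der}_{\mathbb{K}}(B')$ is the free $B'$-module on $\partial t_1,\ldots,\partial t_r,\partial y_1,\ldots,\partial y_m$. I would well-order its monomials $y^a\partial t_s$ and $y^a\partial y_j$ by the following rule: the primary key is the basis direction, with every $\partial t_s$ declared larger than every $\partial y_j$ and, among the $\partial y_j$, a larger subscript counting as smaller; the secondary key is the lexicographic order on $y^a$ with $y_1\prec\cdots\prec y_m$, as in Lemma~3. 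Since the set of directions is finite and the lexicographic order on $\mathbb{N}^m$ is a well-order, this is a well-ordering.

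For $D=\sum_l f_l\partial y_l\in\mathfrak{T}'$ a direct computation gives, for every basis direction $\partial z$, the identity $[D,y^a\partial z]=D(y^a)\partial z-\sum_l y^a\,(\partial z(f_l))\,\partial y_l$. In the first term the direction is unchanged, while $D(y^a)$ is, by the computation of Lemma~3, a combination of monomials lexicographically smaller than $y^a$; in the second term every surviving summand carries a $\partial y_l$, hence a strictly smaller primary key, because all $\partial y$-directions lie below all $\partial t$-directions and, when $z=y_j$, the condition $\partial y_j(f_l)\neq0$ forces $l>j$. Hence $\mathrm{ad}(D)$ sends each monomial to a $\mathbb{K}'$-combination of strictly smaller monomials, so for every $X'$ the leading monomial drops strictly under each successive $\mathrm{ad}(D_i)$ and, by well-foundedness, the iterated bracket must reach $0$. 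This proves that $\mathrm{ad}(\mathfrak{T}')$ is locally nilpotent, and together with the reduction of the first paragraph it yields the theorem.

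I expect the main obstacle to be pinning down the ordering so that the transpose term $-\sum_l y^a(\partial z(f_l))\partial y_l$ — which can raise the polynomial degree and move the differentiation index — still counts as smaller; giving the $\partial$-index priority, and keeping the auxiliary directions $\partial t_s$ (forced by the localisation, since $X$ need not be $\mathbb{K}'$-linear) at the top of the order, is exactly what makes the order simultaneously well-founded and monotone under the bracket. The localisation bookkeeping (that the extension map is a Lie homomorphism and that $S'$ is again locally nilpotent with $\ker S'=\mathbb{K}'$) is routine, being identical to the reductions already used for Corollary~2 and Theorem~4.
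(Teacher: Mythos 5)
Your proposal is correct and follows essentially the same route as the paper: localize at $\ker S$, apply Theorem 3 to triangularize $S$, and then put a well-founded order on derivations whose primary key is the first nonvanishing coordinate (indexed by a transcendence basis of $\ker S$ followed by the triangular variables) and whose secondary key is the lex order of that coordinate's leading monomial, an order which $\mathrm{ad}$ of any triangular derivation strictly decreases. The differences are only in bookkeeping — the paper rescales the $x_i$ to lie in $B$ and works with coordinates $D(b_i), D(x_j)$ on $\mathrm{Der}(B)$ directly, invoking Freudenburg's Prop.~1.8(d) where you instead embed $\mathrm{Der}(B)$ into $\mathrm{Der}_{\mathbb{K}}(B')$ and use the free-module/separability decomposition — and you make explicit the bracket computation that the paper dismisses as easily checked.
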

    
    \begin{proof}
        
        Рассмотрим алгебру $B' = (\ker S\setminus\{0\})^{-1}B$ над полем $\mathbb{K}' = (\ker S\setminus\{0\})^{-1}\mathbb{K}$ и локально нильпотентное множество дифференцирований $S' = (\ker S\setminus\{0\})^{-1}S$ алгебры $B'$. Из теоремы 3 следует, что существуют $x_i$ такие, что $B' = \mathbb{K}'[x_1,\ldots, x_n]$, $S\subseteq\mathbb{K}'\partial x_1\oplus\mathbb{K}'[x_1]\partial x_2\oplus\ldots\oplus\mathbb{K}'[x_1,\ldots, x_{n-1}]\partial x_n$. Умножая элементы $x_i\in B'$ на подходящие элементы из $\ker S\subseteq\mathbb{K}'$, можем считать, что $x_i\in B$. Пусть $\{b_1, b_2,\ldots, b_k\}$~-- базис трансцендентности алгебры $\ker S$. Для каждого дифференцирования $D\in\text{Der}(B)$ рассмотрим его "координаты" $D_1 = D(b_1), D_2 = D(b_2),\ldots, D_k = D(b_k), D_{k+1} =  D(x_1), D_{k+2} = D(x_2),\ldots, D_{k+n} = D(x_n)$. Высотой дифференцирования $D$ будем называть максимальное число $t>0$ такое, что $D_1 = D_2 = \ldots = D_{t-1} = 0$. В случае если дифференцирование $D$ имеет высоту равную $n+k+1$, имеем $\ker D\supseteq\mathbb{K}[b_1,\ldots, b_k, x_1,\ldots, x_n]$~-- алгебра степени трансцендентности равной $\text{tr.deg.}_{\mathbb{K}}(B)$ являющаяся алгебраически замкнутой в $B$ (мы воспользовались \cite[Prop. 1.8(d); стр. 17]{Fr}), а следовательно, $\ker D = B$, $D=0$. Введём частичный порядок на множестве $\text{Der}(B)$. Для произвольных дифференцирований $D, E\in\text{Der}(B)$ будем говорить, что $D\prec E$ если высота $h$ дифференцирования $D$ не меньше высоты дифференцироания $E$ и порядок $h$-ой координаты дифференцирования $D$ как многочлена в $\mathbb{K}'[x_1, \ldots, x_n]$ (считаем, что на многочленах уже введён лексикографический порядок с $x_1\prec x_2\prec\ldots\prec x_n$) строго меньше порядка соответсвующей координаты дифференцирования $E$. Несложно проверяется, что не существует бесконечной строго убывающей последовательности дифференцирований относительно введённого нами порядка, а также, что для произвольных двух дифференцирований $0\not= D\in\text{Der}(B)$, $\partial\in S$, $(\text{ad}\partial)(D)\prec D$. Откуда следует, что $\text{ad}S$~-- локально нильпотентное множество линейных операторов на векторном пространсве $\text{Der}(B)$.
    \end{proof}
    
    \section{Доказательство теоремы 1}
    
    Мы докажем несколько более сильное утверждение, имеющее место для всех коммутативных алгебр $B$ без делителей нуля, конечной степени трансцендентности.
    
    \begin{thm}
        
        Пусть дана алгебра Ли $\mathcal{A}$, лежащая в $\text{LND}(B)$ и известно, что $\ker\mathcal{A} = \mathbb{K}$. Тогда найдутся $x_i$ такие, что $B = \mathbb{K}[x_1,\ldots , x_n]$ и $\mathcal{A}\subseteq\mathbb{K}\partial x_1\oplus \ldots \oplus \mathbb{K}[x_1, \ldots , x_{n-1}]\partial x_n$.
        
    \end{thm}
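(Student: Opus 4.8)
The plan is to deduce this theorem from Theorem 3 by upgrading the hypothesis from ``$\mathcal{A}$ is a locally nilpotent \emph{set}'' to ``$\mathcal{A}$ is a Lie algebra each of whose elements is locally nilpotent''. Concretely, once I know that $\mathcal{A}$ is a locally nilpotent set of derivations, Theorem 3 applies verbatim and produces coordinates $x_1,\dots,x_n$ with $B=\mathbb{K}[x_1,\dots,x_n]$ and $\mathcal{A}\subseteq\mathbb{K}\partial x_1\oplus\dots\oplus\mathbb{K}[x_1,\dots,x_{n-1}]\partial x_n$. So the entire content is the implication: a Lie subalgebra of $\mathrm{LND}(B)$ is automatically a locally nilpotent set. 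This auxiliary claim does not even need $\ker\mathcal{A}=\mathbb{K}$; I would prove it for an arbitrary Lie algebra $\mathcal{A}\subseteq\mathrm{LND}(B)$ and invoke $\ker\mathcal{A}=\mathbb{K}$ only at the final appeal to Theorem 3.

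The engine for local nilpotency is a climb along the derived series, which works as soon as $\mathcal{A}$ is \emph{solvable}. Suppose $\mathcal{A}^{(\ell)}=0$. Then $\mathcal{A}^{(\ell)}=\{0\}$ is trivially a locally nilpotent set, and I claim that if $\mathcal{A}^{(j)}$ is a locally nilpotent set then so is $\mathcal{A}^{(j-1)}$: for any finite $F\subseteq\mathcal{A}^{(j-1)}$ one has $[\mathcal{A}^{(j)}\cup F,\,\mathcal{A}^{(j)}\cup F]\subseteq[\mathcal{A}^{(j-1)},\mathcal{A}^{(j-1)}]=\mathcal{A}^{(j)}$, so Lemma 4 (with $S=\mathcal{A}^{(j)}$) shows $\mathcal{A}^{(j)}\cup F$ is locally nilpotent, whence $F$ is; the finite-subset criterion (Theorem 4) then gives that $\mathcal{A}^{(j-1)}$ is a locally nilpotent set. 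Iterating from $j=\ell$ down to $j=1$ yields that $\mathcal{A}=\mathcal{A}^{(0)}$ is a locally nilpotent set. Thus everything reduces to proving that \textbf{$\mathcal{A}$ is solvable}.

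For solvability I would induct on $N=\mathrm{tr.deg}_{\mathbb{K}}B$. If $\ker\mathcal{A}\neq\mathbb{K}$, passing to $B'=(\ker\mathcal{A}\setminus\{0\})^{-1}B$ over $\mathbb{K}'=\mathrm{Frac}(\ker\mathcal{A})$ strictly drops the transcendence degree while embedding $\mathcal{A}$ into $\mathrm{LND}(B')$, so the inductive hypothesis makes $\mathcal{A}$ solvable. The same localization applied to a derived term is the key: whenever some $\ker\mathcal{A}^{(j)}\supsetneq\mathbb{K}$, localizing at $\ker\mathcal{A}^{(j)}$ places $\mathcal{A}^{(j)}$ in $\mathrm{LND}$ of a domain of smaller transcendence degree (kernels of locally nilpotent derivations are algebraically closed in $B$, so a strict inclusion of such kernels strictly raises transcendence degree), hence $\mathcal{A}^{(j)}$ is solvable by induction and therefore so is $\mathcal{A}$. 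This reduces solvability to excluding the single pathological situation in which $\mathcal{A}^{(i)}\neq0$ and $\ker\mathcal{A}^{(i)}=\mathbb{K}$ for \emph{all} $i$.

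The main obstacle is exactly this non-terminating tail, and it is where I expect to spend most of the effort. Here the $L$-spans $V_i=L\cdot\mathcal{A}^{(i)}\subseteq\mathrm{Der}_{\mathbb{K}}(L)$, $L=\mathrm{Frac}(B)$, form a non-increasing chain of subspaces of dimension $\leq N$, hence stabilize to some $V$ of rank $r\geq1$; on a stabilized term $\mathfrak{h}=\mathcal{A}^{(i_0)}$ one gets $L\mathfrak{h}=L[\mathfrak{h},\mathfrak{h}]=V$, and the Leibniz rule shows $V$ is involutive. The intended contradiction combines two inputs: by Theorem 5 applied to the one-element sets $\{D\}$, every $D\in\mathcal{A}$ acts by an $\mathrm{ad}$-locally-nilpotent operator on $\mathrm{Der}(B)$, so each linear isotropy representation of $\mathfrak{h}$ is nilpotent (Engel); and if $r<N$ the involutive distribution $V$ admits nonconstant first integrals, which by factorial closedness of kernels of locally nilpotent derivations yield a nonconstant element of $\ker\mathcal{A}^{(i_0)}\subseteq B$, contradicting $\ker\mathcal{A}^{(i_0)}=\mathbb{K}$; the remaining transitive case $r=N$ should be excluded using the nilpotency of the isotropy together with the $L$-perfectness $L\mathfrak{h}=L[\mathfrak{h},\mathfrak{h}]$. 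I expect verifying this final dichotomy rigorously -- rather than the clean derived-series climb -- to be the delicate part of the proof.
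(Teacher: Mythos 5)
Your top-level reduction is exactly the paper's: Theorem 6 follows once one shows that every Lie subalgebra of $\text{LND}(B)$ is a locally nilpotent \emph{set} of derivations (this is the paper's Утверждение 1), after which Theorem 3 finishes the job; and your derived-series climb is correct as far as it goes --- granted that $\mathcal{A}$ is solvable, Lemma 4 applied to $S=\mathcal{A}^{(j)}$ and a finite $F\subseteq\mathcal{A}^{(j-1)}$, combined with the finite-subset criterion of Theorem 4, does propagate local nilpotency down the derived series (and there is no circularity, since Theorems 3--5 do not depend on Утверждение 1). The genuine gap is that you have reduced everything to \textbf{solvability of $\mathcal{A}$}, and your argument for solvability is not a proof precisely in the case that carries the entire difficulty: $\mathcal{A}^{(i)}\neq 0$ and $\ker\mathcal{A}^{(i)}=\mathbb{K}$ for all $i$. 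All three ingredients of your sketch fail or are missing. (a) The claim that an involutive distribution of rank $r<N$ admits nonconstant rational first integrals is \emph{false} at that level of generality: the rank-one involutive distribution spanned by $\partial x + y\,\partial y$ on $\mathbb{K}(x,y)$ has no nonconstant rational constants; Frobenius gives only local analytic first integrals, so any correct statement must exploit that $V$ is spanned by locally nilpotent derivations, and you give no such argument. (b) Even granted a nonconstant common constant in $\mathrm{Frac}(B)$, you cannot conclude a nonconstant element of $\ker\mathcal{A}^{(i_0)}\subseteq B$: the identity $\mathrm{Frac}(B)^D=\mathrm{Frac}(B^D)$ holds for each single locally nilpotent $D$, but an element of $\bigcap_D\mathrm{Frac}(B^D)$ need not lie in $\mathrm{Frac}\bigl(\bigcap_D B^D\bigr)$, since the denominators depend on $D$; making them common is the kind of joint statement that itself requires local nilpotency of the whole family --- which is what you are trying to prove. (c) The transitive case $r=N$ is not argued at all. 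You flag this part yourself as the one needing the real effort; it is, and it is absent.

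It is worth seeing how the paper sidesteps solvability entirely, because that is the missing idea. The paper never proves solvability as an input (it comes out a posteriori from the triangular embedding). Instead, Утверждение 1 is proved by a maximality argument: by Corollary 3 and Zorn's lemma choose a maximal locally nilpotent subset $S\subseteq\mathcal{A}$; if $S\neq\mathcal{A}$, then Theorem 5 applied to the \emph{whole set} $S$ --- not merely to singletons $\{D\}$, which is the only way you invoke it and which is a much weaker statement --- shows that $\text{ad}S$ is a locally nilpotent set of operators on $\text{Der}(B)$; Lemma 1 applied to $(\text{ad}S)|_{\mathcal{A}}$ and the proper subspace $S\subsetneqq\mathcal{A}$ then produces $D\in\mathcal{A}\setminus S$ with $[S,D]\subseteq S$, and Lemma 4 makes $S\cup\{D\}$ a larger locally nilpotent set, contradicting maximality. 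This mechanism grows a locally nilpotent set one derivation at a time inside an arbitrary, a priori non-solvable and possibly infinite-dimensional, Lie algebra $\mathcal{A}$ --- exactly the work your proposal delegates to the unproven solvability claim. If you replace your solvability step by this Zorn--Theorem 5--Lemma 1--Lemma 4 loop, your derived-series climb becomes unnecessary and the proof closes.
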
    
    
    Для доказательства теоремы 6 остаётся показать, что всякая алгебра Ли, лежащая в $\text{LND}(B)$, образует локально нильпотентное множество дифференцирований алгебры $B$, после чего применить теорему 3.
    
    \begin{st}
        
        Всякая алгебра Ли $\mathcal{A}$, лежащая в $\text{LND}(B)$, образует локально нильпотентное множество дифференцирований алгебры $B$.
        
    \end{st}
    
    \begin{proof}
        По следствию 3 каждая возрастающая цепочка вложенных локально нильпотентных множеств дифференцирований алгебры $B$ лежащих в $\mathcal{A}$ имеет верхнюю грань, поэтому по лемме Цорна существует максимальное по вложению локально нильпотентное множество дифференцирований $S\subseteq\mathcal{A}$. Предположим, что $S\not=\mathcal{A}$. По теореме 5, $\text{ad}S$~-- локально нильпотентное множество линейных операторов на $\text{Der}(B)$. Применим лемму 1 к локально нильпотентному множеству линейных операторов $(\text{ad}S)|_{\mathcal{A}}$ на векторном пространстве $\mathcal{A}$ и собственному подпространству $S\subsetneqq\mathcal{A}$. Получим, что для некоторого элемента $D\in\mathcal{A}\setminus S$, $(\text{ad}S)(D) = [S, D]\subseteq S$, откуда по лемме 4, $S\cup\{D\}$~-- локально нильпотентное множество дифференцирований большее чем $S$. Противоречие.
    \end{proof}

    \section{Доказательство гипотезы 1}
    
    \begin{thm}
        
        Алгебра Ли $\frak{T} = \mathbb{K}\partial x_1\oplus \ldots\oplus \mathbb{K}[x_1, \ldots, x_{n-1}]\partial x_n$ является максимальной по вложению алгеброй Ли, лежащей в $\text{LND}(\mathbb{K}[x_1, \ldots, x_n])$.
        
    \end{thm}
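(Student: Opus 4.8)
The plan is to show that any Lie algebra $\mathcal{A}$ with $\mathfrak{T}\subseteq\mathcal{A}\subseteq\mathrm{LND}(\mathbb{K}[x_1,\ldots,x_n])$ must coincide with $\mathfrak{T}$. The starting observation is that $\mathfrak{T}$ contains all partial derivatives $\partial x_1,\ldots,\partial x_n$ (take constant coefficients), so $\ker\mathfrak{T}=\bigcap_i\ker\partial x_i=\mathbb{K}$, and hence $\ker\mathcal{A}\subseteq\ker\mathfrak{T}=\mathbb{K}$, i.e. $\ker\mathcal{A}=\mathbb{K}$. Theorem 6 then applies to $\mathcal{A}$ and produces a possibly new coordinate system $y_1,\ldots,y_n$ with $\mathbb{K}[x_1,\ldots,x_n]=\mathbb{K}[y_1,\ldots,y_n]$ and $\mathcal{A}\subseteq\mathfrak{T}_y:=\mathbb{K}\partial y_1\oplus\ldots\oplus\mathbb{K}[y_1,\ldots,y_{n-1}]\partial y_n$. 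Writing $\mathfrak{T}_x$ for the original algebra, the whole problem is thereby reduced to the purely algebraic claim that $\mathfrak{T}_x\subseteq\mathfrak{T}_y$ forces $\mathfrak{T}_x=\mathfrak{T}_y$; indeed this gives $\mathfrak{T}_x\subseteq\mathcal{A}\subseteq\mathfrak{T}_y=\mathfrak{T}_x$.

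To compare the two triangular algebras I would first extract their common flag from the derived series. A direct computation of commutators shows $\mathfrak{T}^{(k)}=\bigoplus_{j>k}\mathbb{K}[x_1,\ldots,x_{j-1}]\partial x_j$ for $0\le k\le n$ (each step loses the lowest surviving partial, while every higher coefficient ring is recovered by antidifferentiating in $x_1$ or in the new lowest variable). Consequently $\ker\mathfrak{T}^{(k)}=\mathbb{K}[x_1,\ldots,x_k]=:R_k$, since $\mathfrak{T}^{(k)}$ contains $\partial x_{k+1},\ldots,\partial x_n$ and kills $R_k$. Now $\mathfrak{T}_x\subseteq\mathfrak{T}_y$ gives $\mathfrak{T}_x^{(k)}\subseteq\mathfrak{T}_y^{(k)}$ and therefore $R_k^y=\ker\mathfrak{T}_y^{(k)}\subseteq\ker\mathfrak{T}_x^{(k)}=R_k^x$. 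Both $R_k^x$ and $R_k^y$ have transcendence degree $k$, and $R_k^y$, being generated by part of the coordinate system $y$, is algebraically closed in $B$; hence $R_k^x$ is algebraic over $R_k^y$ and lies inside it, so $R_k^x=R_k^y$ for every $k$. In other words the two coordinate flags coincide.

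The final and most delicate step is an intrinsic description of a triangular algebra in terms of its flag: I claim $\mathfrak{T}_x=\{D\in\mathrm{LND}(B):D(R_k)\subseteq R_k\text{ for all }k\}$. Since the flag $R_0\subset R_1\subset\ldots\subset R_n$ is now the same for $x$ and for $y$, this identity immediately yields $\mathfrak{T}_x=\mathfrak{T}_y$, completing the proof. The inclusion $\subseteq$ is immediate, because an element $\sum_i f_i\partial x_i$ with $f_i\in R_{i-1}$ visibly preserves each $R_k$ and is locally nilpotent by Lemma 3. The reverse inclusion is the heart of the matter: given a locally nilpotent $D$ preserving the flag, I must show that its coefficients are strictly lower triangular, i.e. $D(x_k)\in R_{k-1}$ for all $k$. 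I would prove this by induction on $k$, restricting $D$ to the invariant polynomial ring $R_k=R_{k-1}[x_k]$ and analysing the $x_k$-degree of the iterates $D^m(x_k)$.

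The hard part will be exactly this last degree analysis. If $g:=D(x_k)$ has $x_k$-degree $d\ge 2$, then each application of $D$ raises the $x_k$-degree by $d-1$, and the leading coefficient never cancels (we are in characteristic zero), so $D^m(x_k)$ has unbounded degree, contradicting local nilpotency. The borderline case $d=1$, say $g=ax_k+b$ with $a,b\in R_{k-1}$ and $a\ne 0$, requires a finer argument: the leading coefficients $\ell_m$ of $D^m(x_k)$ obey the recursion $\ell_{m+1}=F(\ell_m)+a\ell_m$, where $F=D|_{R_{k-1}}$ is the already triangular, by induction, restriction. Using the standard additivity of the degree function of the locally nilpotent derivation $F$ on the domain $R_{k-1}$ one checks that $\deg_F\ell_{m+1}=\deg_F a+\deg_F\ell_m$ when $a\notin\ker F$, while $\ell_m=a^m$ when $a\in\ker F$; in either case $\ell_m\ne 0$ for all $m$, so $D^m(x_k)$ never vanishes. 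This forces $d=0$, i.e. $D(x_k)\in R_{k-1}$, closing the induction and hence the proof.
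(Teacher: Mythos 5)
Your argument is correct, but after the common first step it diverges completely from the paper's proof, so it is worth comparing the two. Both begin the same way: any Lie algebra $\mathcal{A}$ with $\mathfrak{T}\subseteq\mathcal{A}\subseteq\text{LND}(\mathbb{K}[x_1,\ldots,x_n])$ satisfies $\ker\mathcal{A}=\mathbb{K}$, and Theorem 6 places $\mathcal{A}$ inside a triangular algebra $\mathfrak{T}_y$ for some new coordinate system. The paper then finishes abstractly: it takes (implicitly via Zorn's lemma) a maximal Lie subalgebra $\mathcal{A}\supseteq\mathfrak{T}$, uses Theorem 1 to conjugate it onto a subalgebra $\mathcal{A}'\subseteq\mathfrak{T}$, and observes that conjugation by an automorphism preserves maximality, so $\mathcal{A}'=\mathfrak{T}$ and $\mathfrak{T}$ inherits maximality from $\mathcal{A}$ --- a three-line argument that gives no information about how $\mathfrak{T}_x$ actually sits inside $\mathfrak{T}_y$. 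You instead prove the concrete rigidity statement $\mathfrak{T}_x\subseteq\mathfrak{T}_y\Rightarrow\mathfrak{T}_x=\mathfrak{T}_y$, and all three of your steps check out: the derived series computation $\mathfrak{T}^{(k)}=\bigoplus_{j>k}\mathbb{K}[x_1,\ldots,x_{j-1}]\partial x_j$ is right (the $\partial x_{k+1}$-component of any bracket vanishes because the relevant coefficients are killed by $\partial x_j$, $j>k$, while antidifferentiation in $x_{k+1}$ gives the reverse inclusion); the flag identification $R_k^x=R_k^y$ via transcendence degree together with algebraic closedness of $R_k^y=\ker\mathfrak{T}_y^{(k)}$ (an intersection of kernels of locally nilpotent derivations) is right; and the characterization of $\mathfrak{T}_x$ as the set of \emph{all} locally nilpotent derivations preserving the flag is the genuinely new ingredient --- you correctly identified that flag-preservation only gives $D(x_k)\in R_k$ rather than $R_{k-1}$, and your two-case degree analysis ($d\ge 2$: the $x_k$-degree grows by $d-1$ at each application; $d=1$: additivity of $\deg_F$ on the domain $R_{k-1}$ keeps the leading coefficient $\ell_m$ nonzero) is exactly what closes that gap. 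Your route is longer but buys more: it avoids Zorn's lemma and the maximality-transfer trick, it proves the stronger fact that no triangular algebra is properly contained in another one in any coordinate system, and it exhibits the flag $\mathbb{K}[x_1]\subset\mathbb{K}[x_1,x_2]\subset\cdots$ as a Lie-theoretic invariant of $\mathfrak{T}$ (which is also the structural reason behind the failure of Conjecture 2 exploited in the paper's Theorem 2); the paper's route buys brevity at the cost of that insight.
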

    
    \begin{proof}
        
        Рассмотрим максимальную по вложению алгебру Ли $\mathcal{A}$, вложенную в \\$\text{LND}(\mathbb{K}[x_1, \ldots, x_n])$ такую, что $\ker\mathcal{A} = \mathbb{K}$ (в качестве примера такой алгебры Ли можно рассмотреть произвольную максимальную по вложению алгебру Ли содержащую $\frak{T}$ и лежащую в $\text{LND}(\mathbb{K}[x_1, \ldots, x_n])$). По теореме 1 получаем, что алгебра Ли $\mathcal{A}$ сопряжена некоторой подалгебре $\mathcal{A}'$ алгебры Ли $\frak{T}$. Из максимальности $\mathcal{A}$ имеем $\frak{T} = \mathcal{A}'$~-- максимальная по вложению алгебра Ли, лежащая в $\text{LND}(\mathbb{K}[x_1, \ldots, x_n])$. \end{proof}

    %\newpage


\begin{thebibliography}{10}
        
        \bibitem{Fr}
        Freudenburg, G. {\em Algebraic Theory of Locally Nilpotent Derivations} (2006).
        
        \bibitem{Da}
        Daigle, Daniel, and Gene Freudenburg. 
        \textit{Locally Nilpotent Derivations over a UFD and an Application to Rank Two Locally Nilpotent Derivations ofk [X1,…, Xn]}. Journal of Algebra 204.2 (1998): 353-371.
        
        \bibitem{D1}
        Daigle, Daniel, and Shulim Kaliman.
        \textit{A Note on Locally Nilpotent Derivations and Variables of k (X, Y, Z)}. Canadian Mathematical Bulletin 52.4 (2009): 535.
        
        \bibitem{R}
        Rentschler, Rudolf.
        \textit{Op\'erations du groupe additif sur le plan affine}. CR Acad. Sci. Paris S?r. AB 267 (1968): 384-387.
    \end{thebibliography}
\end{document}